\newtheorem{thm}{Theorem}[section]
\newtheorem{lem}[thm]{Lemma}
\newtheorem{prop}[thm]{Proposition}
\theoremstyle{definition}
\theoremstyle{remark}
\newtheorem{rem}[thm]{Remark}
\numberwithin{equation}{section}
\renewcommand{\parallel}{\mathrel{/\mkern-5mu/}}
\newcommand{\notparallel}{%
  \mathrel{\mathpalette\not@parallel\relax}%
}
\newcommand{\not@parallel}[2]{%
  \ooalign{\reflectbox{$\m@th#1\smallsetminus$}\cr\hfil$\m@th#1\parallel$\cr}%
}
 \theoremstyle{plain}
\newcommand{\lam}{\lambda}
\newcommand{\Del}{\Delta}
\newcommand{\calF}{{\mathcal F}}
\newcommand{\calH}{{\mathcal H}}
\newcommand{\calI}{{\mathcal I}}
\begin{document}

\title[]{nonsymmetric traveling wave solution to a Hele-Shaw type tumor growth model}%

\author{Yu Feng, Qingyou He, Jian-Guo Liu, Zhennan Zhou}%

\address{Yu Feng: School of Science, Great Bay University, Guangdong, 523000, China}
\email{fengyu@gbu.edu.cn}
\address{Qingyou He: Sorbonne Universit{\'e}, CNRS, Laboratoire de Biologie Computationnelle et Quantitative, F-75005 Paris, France}
\email{qyhe.cnu.math@qq.com}
\address{Jian-Guo Liu: Department of Mathematics and Department of
  Physics, Duke University, Durham, NC 27707, USA}
\email{jian-guo.liu@duke.edu}
\address{Zhennan Zhou: Institute for Theoretical Sciences, Westlake University, Hangzhou, Zhejiang Province, 310030, China}
\email{zhouzhennan@westlake.edu.cn}
\date{\today}

\subjclass[2020]{35R35, 76D27, 92C10, 70K50}%

\keywords{Free boundary model, Hele-Shaw flows, Biomechanics, bifurcation and instability}%
\thanks{}

\begin{abstract}
We consider a Hele-Shaw model that describes tumor growth subject to nutrient supply. The model is derived by taking the incompressible limit of porous medium type equations, and the boundary instability of this model was recently studied in \cite{feng2022tumor} using asymptotic analysis. In this paper, we further prove the existence of nonsymmetric traveling wave solutions to the model in a two dimensional tube-like domain, which reflect intrinsic boundary instability in tumor growth dynamics.
\end{abstract}


\maketitle


\section{introduction}
\label{sec:model introduction}
Tumors remain one of the most severe diseases threatening human life and health. The instability of the tumor boundary, characterized by the formation and evolution of finger-like protrusions, plays a crucial role in tumor invasion and metastasis. These structures enable tumor cells to infiltrate surrounding healthy tissues more efficiently, contributing to the aggressiveness of malignant tumors \cite{amar2011contour, cristini2005morphologic, ye2024fingering}. Consequently, understanding the mechanisms underlying these morphological instabilities is of both theoretical and clinical significance. This paper investigates these mechanisms within the framework of a mathematical model that has gained increasing attention in recent years \cite{DAVID2021,DPSV2021,GKM2022,KM2022,PERTHAME2014}. To highlight the relevance of this model, we first review the development and classification of mathematical tumor growth models.

Mathematical modeling has become a powerful tool for studying tumor growth dynamics, offering insights into the fundamental mechanisms that drive morphological instabilities \cite{borisovich2005symmetry, cristini2003nonlinear, friedman2007bifurcation, gatenby1996reaction}. Existing tumor growth models can be broadly classified into two main categories. The first category consists of reaction-diffusion models, which describe the spatiotemporal evolution of tumor cell density using parabolic-type partial differential equations (PDEs) incorporating proliferation, diffusion, and chemotaxis \cite{oden2016toward,sfakianakis2020mathematical}. These models effectively capture the macroscopic dynamics of tumor expansion and have been extensively studied in mathematical oncology. The second category includes Hele-Shaw-type free boundary models, which were originally developed to describe the flow of a viscous fluid confined between two closely spaced parallel plates and were first applied to model tumor growth by Greenspan in \cite{greenspan1972models, greenspan1976growth}. In this framework, the tumor is treated as a saturated domain with a moving free boundary, where the internal pressure determines the boundary evolution through Darcy's law. These models have been particularly useful for studying the formation of finger-like structures observed in tumor invasion.

A fundamental challenge in the mathematical study of Hele-Shaw-type tumor growth models is the analysis of their well-posedness and boundary instability. Unlike Stefan problems \cite{friedman1968stefan}, parabolic-type free boundary problems, Hele-Shaw-type models lack time regularization due to their elliptic nature. Although local existence results have been established \cite{chen2003free}, its long-time general well-posedness remains an open problem. In the past two decades, two major advances have been made in addressing the mathematical challenges of these models. For one thing, Friedman and his collaborators studied a class of Hele-Shaw-type tumor growth models by perturbing radially symmetric solutions, establishing the existence of nonsymmetric steady-state solutions with simple-mode perturbations \cite{borisovich2005symmetry, friedman353symmetry}. Their results provided key insights into the mechanisms of boundary instability. For another thing, Perthame et al. rigorously derived a class of Hele-Shaw-type free boundary problems as the incompressible limit of porous medium equations (PME), bridging the gap between reaction-diffusion models and Hele-Shaw models \cite{PERTHAME2014}. This connection offers a unified framework linking two widely used modeling paradigms. In the following, we refer to such incompressible-imit models as the PME-derived Hele-Shaw models.

Although the PME-derived Hele-Shaw models share some similarities with those initially proposed by Greenspan \cite{greenspan1972models, greenspan1976growth} and further developed by Friedman et al. \cite{chen2003free, cui2001analysis, friedman2006bifurcation, friedman2007bifurcation, friedman2008stability, friedman1999analysis}, they exhibit significant differences. Notably, in the Greenspan-type models, the pressure can take negative values and the boundary conditions depend on the curvature of the interface. Moreover, in these models, the coefficient in front of the curvature term usually serves as the bifurcation parameter in boundary instability studies \cite{byrne1996growth, cui2006formation, cui2001analysis, hao2012bifurcation, wu2021analysis}. In contrast, in the PME-derived models, the pressure, as the limit of a density power function, must remain nonnegative and vanish at the tumor boundary. For a detailed discussion of these differences, we refer the reader to \cite{dou2023tumor}. Importantly, while significant progress has been made in analyzing boundary instabilities in Greenspan-type models, the instability properties of PME-derived Hele-Shaw models remain largely unexplored.

The objective of this paper is to investigate boundary instability in a PME-derived Hele-Shaw type tumor growth model in dimension two. The model under consideration, first proposed in \cite{perthame2014traveling}, is given by
\begin{subequations}
\label{eqn: mainmodel}
\begin{alignat}{2}
\label{eqn: pressure0}
-\Del p &= G_0 c,&\qquad\text{in}\quad D(t),\\
\label{bc: pressure0}
p&=0,&\qquad\text{on}\quad\partial D(t),\\
-\Delta c+\lambda c&=0,&\qquad\text{in}\quad D(t),\\
-\Delta c+ c&=c_B,&\qquad\text{in}\quad \mathbb{R}^2\setminus D(t),
\end{alignat}
where $G_0, c_B, \lam>0$, and $\textbf{x}\in D(t)\subseteq\mathbb{R}^2$ represents the tumor domain at time $t$, $p(\textbf{x},t)$ is the internal pressure, and $c(\textbf{x},t)$ denotes the nutrient concentration. The boundary evolution follows Darcy's law:
\begin{equation}
\label{bv}
    v\vert_{\partial D}=-\nabla p\cdot n\vert_{\partial D},
\end{equation} 
\end{subequations}
where $n$ stands for the outer normal vector on the free boundary. The model \eqref{eqn: mainmodel} can be interpreted as follows: $G_0 c$ is the growth rate function. Within the tumor, nutrients are consumed by tumor cells at a rate of $\lambda$, while outside the tumor, nutrients are supplied by the vascular network in the healthy region, with the supply rate proportional to the concentration difference $c_B-c$ for the density of the background nutrients $c_B$.

A recent study by Feng et al. \cite{feng2022tumor} investigated the boundary instability of \eqref{eqn: mainmodel} using an asymptotic analysis approach, which complements the current understanding of this model \cite{feng2024unified,jiang2025efficient, liu2018accurate, liu2018analysis, perthame2014traveling}. They initialize a small perturbation to \eqref{eqn: mainmodel} around the symmetric solutions with perturbation profiles given by cosine functions and perturbation amplitude $\epsilon(t)$ small enough, which reduced the evolution of the boundary perturbation to the dynamics of the perturbation amplitude, in particular, they derived the so-called boundary evolution equation (see \cite{cristini2003nonlinear}):
$$
\epsilon^{-1}\frac{d\epsilon}{dt}=E(\lam,l),
$$ 
and characterized the boundary instability by determining the sign of $E(\lam,l)$. When it takes a positive value, the finger-like structures grow; otherwise, the boundary degenerates to the symmetric one. The main result in \cite{feng2022tumor} interprets that the nutrient consumption rate $\lambda$ can trigger the boundary instability in \eqref{eqn: mainmodel}. Specifically, the boundary remains stable for any perturbation frequency if $\lambda \le 1$. However, for $\lambda>1$, there is a threshold value for the perturbation frequency, below which the boundary instability occurs, although higher frequencies remain stable. Recently, the boundary instability for the same model in dimension three has been explored using asymptotic analysis in \cite{liu2024three}.

In this paper, we further reveal the intrinsic boundary instability of model~\eqref{eqn: mainmodel} by proving the existence of nonsymmetric traveling wave solutions in a tube-like domain. The proof is based on linking such traveling wave solutions to the nontrivial bifurcation branches to a nonlinear functional equation 
$$F(\xi,\lam)=0,$$ 
where $\xi$ is a function that describes the boundary profile and $\lam$ is the parameter of nutrient consumption rate in the model. The symmetric traveling wave solutions of \eqref{eqn: mainmodel} are naturally linked to the branch of trivial solutions $(0,\lam)$. Our main result demonstrates that the above functional equation admits nontrivial solutions (for which $\xi\neq 0$), and such solutions can be interpreted as a curve parameterized by $\epsilon$ in the $(\xi,\lam)$ phase plane. We summarize it as follows (see Theorem \ref{thm: mainthm} for a more precise version):

\begin{thm} For each perturbation frequency $l\in \mathbb{N}$, there exists a bifurcation point $\lam_0^l$ such that one can find a bifurcation branch $(\xi_{\epsilon}(y),\lam_\epsilon)$ (parameterized by the small parameter $0<\epsilon\ll 1$) starting from $(0,\lam_0^l)$ such that $F(\xi_\epsilon(y),\lam_\epsilon)=0$. Thus, there exist nontrivial traveling wave solutions to the model with the boundary profile given by $\xi_\epsilon(y)$ associated with nutrient consumption rate $\lam_\epsilon$.
\end{thm}

The proof of the theorem is carried out by investigating the Fr\'{e}chet derivative of the nonlinear map $F(\xi,\lam)$ on the line $(0,\lam)$, denoted as $F_\xi(0,\lam)$. In particular, we show that $F_\xi(0,\lam)$ as a bounded linear operator can be characterized in terms of an eigenvalue problem (see Proposition \ref{lem:Fder}, Remark \ref{rem:linearity}, and Remark \ref{rem:eproblem}), with the complete basis of eigenfunctions given by cosine functions, and the associated eigenvalues coincide with $E(\lam,l)$. Finally, utilizing the explicit expression of $E(\lam,l)$, we conclude the proof by determining the bifurcation points and verifying the assumptions of the celebrated Crandall-Rabinowitz theorem (Theorem~\ref{thm:Crandall}).

As discussed above, model~\eqref{eqn: mainmodel} is derived from taking the incompressible limit of the PME type model (see \eqref{eqn:pme density eqn}-\eqref{eqn:finite m invivo} below). Therefore, we provide a detailed literature review of incompressible limit studies in tumor growth models. Perthame et al. first generalized related studies to tumor growth models in the seminal work~\cite{PERTHAME2014}, which facilitates numerous impressive works in this direction ~\cite{DAVID2021,DPSV2021,DS2020,GKM2022,KM2022,KT2018,LX2021}. The Hele-Shaw asymptotic limit of the tumor growth model~\cite{DAVID2021,PQTV2014, PERTHAME2014,PV2015} was initially studied. 
For the tumor growth model with Brinkman's pressure law governing the motion, the authors in~\cite{KT2018} established an optimal uniform convergence of the density and the pressure, where the optimality says the convergence in $L_{loc}^\infty$ norm. The two-species case's Hele-Shaw (incompressible) limits were proved in~\cite{DPSV2021,DS2020}.  The Hele-Shaw limit of the PME with the non-monotonic (even non-local) reaction terms through the approach of the obstacle problem was completed in~\cite{GKM2022,KM2022}. The existence of weak solutions and the free boundary limit of a tissue growth model with autophagy were obtained in~\cite{LX2021}, respectively.  The singular limit of the PME with a drift was discussed in~\cite{KPW2019}. Recently, the convergence of free boundaries in the incompressible limit of tumor growth models was considered in \cite{JY2024}. In addition, the incompressible limits for chemotaxis, even with growth effect, were shown in \cite{CKY2018,HLP2023,HLP2022}. 

Before concluding, we draw the reader’s attention to recent works \cite{jacobs2023tumor,kim2023tumor}, which investigate the boundary behavior of models closely related to \eqref{eqn: mainmodel}, but with nutrient dynamics governed by a parabolic type equation. In \cite{jacobs2023tumor}, the authors establish boundary regularity for the coupled model and further demonstrate that when nutrient diffusion is absent and cell death is neglected, the tumor density exhibits a regularizing effect. Based on this, \cite{kim2023tumor} shows that if nutrient diffusion is small, the tumor patch boundary remains within a small neighborhood of the smooth boundary obtained in the nondiffusive case studied in \cite{jacobs2023tumor}.

\vspace{1mm}

\noindent\textbf{Organization of this paper.}   In Section \ref{sec:Incompressible limit derivation}, we provide a formal derivation of the Hele-Shaw model by taking the incompressible limit of a density model in the type of porous medium equation. In Section \ref{sec:Bifurcation Analysis}, we prove the existence of nonsymmetric traveling wave solutions to this model in a tube-like domain by using the Crandall-Rabinowitz Theorem.

\section{Model derivation}
\label{sec:Incompressible limit derivation}
This section is devoted to providing a formal derivation of the model \eqref{eqn: mainmodel} by taking the incompressible limit of a cell density model of the PME type. We begin by introducing this cell density model in the following subsection.
\subsection{A cell density model of PME type}
We let $\rho_m(\textbf{x},t)$ to denote the tumor cell density, start from the initial state $\rho_{m,0}(\textbf{x})$, and $D_m(t)$ be the supporting set of $\rho_m(\textbf{x},t)$ that is 
\begin{equation}
\label{eqn:set D}
    D_m(t)=\left\{\textbf{x}\vert\rho_m(\textbf{x},t)>0\right\}.
\end{equation}
Physically, it presents the tumoral region at time $t$. We assume the pressure inside the tumor follows the constitutive relationship of 
\begin{equation}
 P_m(\rho_m)=\frac{m}{m-1}\rho_m^{m-1}, 
\end{equation}
and the tumor cell moving velocity is governed by the Darcy's law $V=-\nabla P_m$. Thus, in particular, the boundary expansion is characterized by $V\vert_{\partial D_m}=-\nabla P_m\vert_{\partial D_m}$ and $D_m(t)$ remains finite for any $t<\infty$ provided $D(0)$ is compact. On the other hand, we employ $c_m(\textbf{x},t)$ to denote the nutrient concentration and assume the cells' production rate is proportional to it. With the above assumption, the evolution of $\rho_m$ satisfies the following porous medium equation (PME) with source term:
\begin{equation}
\label{eqn:pme density eqn}
    \partial_t\rho_m-\nabla\cdot\left(\rho_m\nabla P_m\right)=G_0 c_m\rho_m,\quad t\geqslant 0, \quad G_0>0.
\end{equation}
Regarding the nutrient concentration $c_m$, it is governed by the following reaction-diffusion equation in general:
\begin{equation}
\label{eqn:parabolic nutrient}
    \tau \partial_t c_m-\Delta c_m +\Psi(\rho_m,c_m)=0,
\end{equation}
where the parameter $\tau\geq 0$ characterizes the nutrient change time scale, and the binary function $\Psi(\rho_m,c_m)$ describes the overall effects of the nutrient supply outside the tumor and the nutrient consumption inside the tumor. Considering the timescale parameter $\tau\ll 1$ (see, e.g. \cite{adam1997survey, byrne1996growth}), we drop it to get the elliptical form of nutrient equation,
\begin{equation}
\label{eqn:general nutrient eqn}
    -\Delta c_m +\Psi(\rho_m, c_m)=0.
\end{equation}
Then, we focus on the so-called \emph{in vivo} regime in \cite{feng2022tumor}, in which the nutrients are provided by vessels of the healthy tissue surrounding the tumor. Mathematically, we assume the nutrients is consumed at a rate of $\lam>1$ in the tumor cell saturated region $S_m(t)=\{\textbf{x}\vert \rho_m(\textbf{x},t)\geq 1\}$. While, outside $S_m(t)$, the nutrient supply is determined by the concentration difference from the background, $c_B - c_m$, with the rate of $(1-\rho_m)_+$. Thus, the binary function takes the form of
\begin{equation}
\label{eqn:PME_overall}
    \Psi(\rho_m,c_m)=\lambda\rho_m c_m\cdot\chi_{S_m}-(1-\rho_m)_+(c_B-c_m)\cdot\chi_{S_m^c},
\end{equation}
where $f_+=\max\{f,0\}$. Or equivalently,
\begin{subequations}
\label{eqn:finite m invivo}
\begin{alignat}{2}
-\Delta c_m + \lambda\rho_m c_m= 0,\qquad &\text{for}\quad x\in S_m(t),\\
-\Delta c_m =(1-\rho_m)_+(c_B-c_m),\qquad &\text{for}\quad x\in \mathbb{R}^n\setminus S_m(t).
\end{alignat}
\end{subequations}

By now, we have finished introducing the cell density model (\eqref{eqn:pme density eqn} coupled with \eqref{eqn:PME_overall}, equivalently, \eqref{eqn:finite m invivo}). 

\subsection{A formal derivation}
In this subsection we derive \eqref{eqn: mainmodel} formally. Firstly, by multiplying $m\rho_m^{m-1}$ on the both sides of equation \eqref{eqn:pme density eqn} one gets the equation for pressure
\begin{equation}
\label{eqn: finite_pressure}
    \partial_t P_m=\left\vert\nabla P_m\right\vert^2+(m-1) P_m\left(\Delta P_m+G_0 c_m\right).
\end{equation}
Then, by sending $m\rightarrow\infty$ and let $(\rho_{\infty}, P_{\infty}, c_{\infty})$ denote the limit density, pressure, and nutrient, respectively. One can formally obtain the so-called \emph{complementarity condition}:
\begin{equation}
\label{eqn:complementary}
    P_{\infty}(\Delta P_{\infty}+G_0 c_\infty)=0.
\end{equation}
At the same time, the limit density $\rho_{\infty}$ satisfies the following equation in the distributional sense:
\begin{equation}
       \frac{\partial}{\partial t}\rho_{\infty}-\nabla\cdot\left(\rho_{\infty}\nabla P_{\infty}\right)=G_0 \rho_{\infty} c_{\infty},
\end{equation}
with $P_{\infty}$ compels $\rho_{\infty}$ only take value in the range of $\left[0,1\right]$ for any initial date $\rho_{\infty,0}\in\left[0,1\right]$. And $P_{\infty}$ belongs to the Hele-Shaw monotone graph:
\begin{equation}
\label{eqn:HS graph}
P_{\infty}(\rho_{\infty})=\left\{
  \begin{array}{rcr}
    0, \qquad 0\leqslant\rho_{\infty}<1,\\
    \left[0,\infty\right),\qquad \rho_{\infty}=1.\\
  \end{array}
\right.
\end{equation}
Observe that, in general, the supporting set of $\rho_{\infty}$ is larger than that of $P_{\infty}$. However, a transparent regime called "patch solutions" exists for a large class of initial data, in which the two sets coincide with each other, denoted by $D_{\infty}(t)$. At the same time, the limit density evolves in the form of $\rho_{\infty}=\chi_{D_{\infty}(t)}$, where $\chi_{A}$ presents the characteristic function of the set $A$. In this specific regime, \eqref{eqn:finite m invivo} reduce to
\begin{subequations}
\label{eqn: infinite m invivo}
\begin{alignat}{2}
-\Delta c_\infty + \lambda c_\infty= 0,\qquad &\text{for}\quad x\in D_{\infty}(t),\\
-\Delta c_\infty =c_B-c_\infty,\qquad &\text{for}\quad x\in \mathbb{R}^n\setminus D_{\infty}(t).
\end{alignat}
\end{subequations}
While, \eqref{eqn:complementary} and \eqref{eqn:HS graph} together yields
\begin{subequations}
\label{eqn: limitpressureeqn}
\begin{alignat}{2}
-\Del P_\infty &= G_0 c_\infty,&\qquad\text{in}\quad D_{\infty}(t),\\
P_\infty&=0,&\qquad\text{on}\quad\partial D_{\infty}(t).
\end{alignat}
\end{subequations}
Finally, by dropping the subscripts in \eqref{eqn: infinite m invivo} and \eqref{eqn: limitpressureeqn}, and replacing $P$ by $p$ in \eqref{eqn: limitpressureeqn}, one gets the desired model \eqref{eqn: mainmodel}. We emphasize that before taking the incompressible limit, the pressure function \eqref{eqn: finite_pressure} and the nutrient functions \eqref{eqn:finite m invivo} are coupled strongly to each other. However, fortunately, by taking the incompressible limit and further restricting ourselves to the patch solution regime, the two equations decoupled automatically. 

The above derivation can be presented rigorously. For the reader's convenience, we provide a complete proof in Appendix~\ref{ic}.

\section{Bifurcation Analysis}
\label{sec:Bifurcation Analysis}
This section is dedicated to proving the existence of nonsymmetric traveling wave solutions for \eqref{eqn: mainmodel} in a tube-like domain. We begin by introducing the necessary notations in Section \ref{sec:Model set up}, followed by solving symmetric solutions in Section \ref{sec:Symmetric solutions}. In Section \ref{sec:Dissucssion on non-symmtric solutions}, we describe the nonsymmetric solutions obtained by perturbing symmetric ones and outline our proof strategy. Finally, Sections \ref{sec:The linearized system} through \ref{sec:Existence of nontrivial bifurcation branches} present the detailed proof, following the framework established at the end of Section \ref{sec:Dissucssion on non-symmtric solutions}.

\subsection{Model set up}
\label{sec:Model set up}
To begin with, let $\Omega$ denote the entire tube-like domain defined as follows,
\begin{equation}
\label{eqn:infinity_tube}
\Omega = \left\{(x,y) \mid (x,y) \in (-\infty,+\infty) \times [-\pi,\pi]\right\},
\end{equation}
and we post periodic boundary conditions for the $y$ variable. For any $2\pi$ periodic even function $\xi(y)$, the tumor region is defined by
\begin{equation}
\label{eqn: perturbed_domain_X1}
   \Omega_{\xi}=\left\{(x,y)\vert x\leq \xi(y),\quad y\in[-\pi,\pi]\right\},
\end{equation}
and the associated boundary is given by
\begin{equation}
\label{eqn: perturbed_boundary_X1}
   \mathcal{B}_{\xi}=\left\{(x,y)\vert x=\xi(y),\quad y\in[-\pi,\pi]\right\}.
\end{equation}
We employ the superscripts (i) and (o) to denote the solutions inside and outside the tumor region, respectively. Inside the tumor region, the nutrient and pressure $(c^{\text{(i)}}(x,y;\xi),p^{\text{(i)}}(x,y;\xi))$ satisfy that 
\begin{subequations}
\label{eqn:general_all}
\begin{alignat}{2}
\label{eqn:general_inner}
    -\Delta c^{\text{(i)}}+\lam c^{\text{(i)}}&=0,&\quad\text{in}\quad \Omega_{\xi},\\
    -\Delta p^{\text{(i)}}&=G_0 c^{\text{(i)}},&\quad\text{in}\quad \Omega_{\xi}.
\end{alignat}
In contrast, outside the tumor region, we require  $(c^{\text{(o)}}(x,y;\xi),p^{\text{(o)}}(x,y;\xi))$ to satisfy 
\begin{alignat}{2}
\label{eqn:general_outer}
    -\Delta c^{\text{(o)}}+c^{\text{(o)}}&=c_B,&\quad\text{in}\quad \Omega\setminus\Omega_{\xi},\\
    -\Delta p^{\text{(o)}}&=G_0 c^{\text{(o)}},&\quad\text{at}\quad \Omega\setminus\Omega_{\xi}.
\end{alignat}
The inner solutions and the outer solutions are matched at the boundary in the following sense
\begin{alignat}{2}
\label{bc:general}
     c^{\text{(i)}}&=c^{\text{(o)}},&\quad\text{at}\quad \mathcal{B}_{\xi},\\
     \frac{\partial}{\partial n}c^{\text{(i)}}&= \frac{\partial}{\partial n}c^{\text{(o)}},&\quad\text{at}\quad \mathcal{B}_{\xi},\\
    p^{\text{(i)}}&=p^{\text{(o)}}=0,&\quad\text{at}\quad \mathcal{B}_{\xi},
\end{alignat}
where $\frac{\partial}{\partial n}$ represents the outer normal derivative of the tumor region $\Omega_{\xi}$ on the boundary $\mathcal{B}_{\xi}$.
Besides, we also require that
\begin{alignat}{2}
     c^{\text{(i)}}&<\infty,&\quad\text{at}\quad x= -\infty,\\
     c^{\text{(o)}}&=c_B&\quad\text{at}\quad x= +\infty,\\
     p^{\text{(i)}}&<\infty,&\quad\text{at}\quad x= -\infty.
\end{alignat}
\end{subequations}
Note that the pressure is extended to the entire domain only for technical purposes and, thus, we do not require $p^{\text{(o)}}(x,y;\xi)$ to be bounded at $x=+\infty$. And, for notation simplicity we introduce:
\begin{equation}
\label{eqn:general_wholesoln}
    c(x,y;\xi):=c^{\text{(i)}}(x,y;\xi)+c^{\text{(o)}}(x,y;\xi),\quad p(x,y;\xi):=p^{\text{(i)}}(x,y;\xi)+p^{\text{(o)}}(x,y;\xi).
\end{equation}
We emphasize that for arbitrary $(\xi(y),\lam)$, the boundary profile is not steady in general; it tends to evolve according to Darcy's law, and the normal speed $v$ on the boundary is given by
\begin{equation}
    v\vert_{\mathcal{B}_\xi}=-\nabla p^{\text{(i)}}\cdot n\vert_{\mathcal{B}_\xi},
\end{equation}
where $n$ stands for the outer normal vector on $\mathcal{B}_\xi$.

Our goal is to demonstrate that perturbing the symmetric solutions, where $\xi(y)\equiv 0$, of equation \eqref{eqn:general_all} can result in nontrivial solutions, where the boundary profile 
$\xi(y)\neq 0$ remains steady and propagates to the right at a constant speed. To achieve this, the next subsection is dedicated to solving for symmetric solutions.

\subsection{Symmetric solutions}
\label{sec:Symmetric solutions}
For symmetric solutions, the tumor occupies the left half of the tube and we denote it by $\Omega_0$ as follows
\begin{equation}
\label{eqn:symmetric_domain}
\Omega_0=\left\{(x,y)\vert x\leq 0\right\},
\end{equation}
and the corresponding boundary is given by
\begin{equation}
\mathcal{B}_0=\left\{(x,y)\vert x= 0\right\}.
\end{equation}
Provide any $\lam>0$, by solving \eqref{eqn:general_all} but with $\Omega_\xi$ and $\mathcal{B}_{\xi}$ replaced by $\Omega_0$ and $\mathcal{B}_0$, respectively, we get symmetric solutions as follows:
\begin{subequations}
\label{eqn: invivo_unperturbed_solns}
\begin{alignat}{2}
&c_0^{\text{(i)}}(x)=\frac{c_B}{\sqrt{\lam}+1}e^{\sqrt{\lam}x},&\quad \text{for}\quad x\leq 0,\\
&c_0^{\text{(o)}}(x)=c_B-\frac{c_B\cdot \sqrt{\lam}}{\sqrt{\lam}+1}e^{-x},&\quad \text{for}\quad x\geq 0,\\
&p_0^{\text{(i)}}(x)=\frac{G_0\cdot c_B}{\lam(\sqrt{\lam}+1)}-\frac{G_0\cdot c_B}{\lam(\sqrt{\lam}+1)}e^{\sqrt{\lam}x},&\quad \text{for}\quad x\leq 0,\\
&p_0^{\text{(o)}}(x)=G_0 c_B\left(-\frac{x^2}{2}+\frac{\sqrt{\lambda}}{\sqrt{\lambda}+1}e^{-x}+\frac{\lambda-1}{\sqrt{\lambda}(\sqrt{\lambda}+1)}x-\frac{\sqrt{\lambda}}{\sqrt{\lambda}+1}\right),&\quad \text{for}\quad x\geq 0,
\end{alignat}
and the associated traveling speed is given by
\begin{equation}
\label{eqn: invivo_v0}
    v_0(\lambda)=-\frac{\partial p_0^{\text{(i)}}}{\partial x}(0)=\frac{G_0\cdot c_B}{\sqrt{\lam}(\sqrt{\lam}+1)}.
\end{equation}
\end{subequations}
Finally, we define $(c_0(x),p_0(x))$ in the same way as \eqref{eqn:general_wholesoln}.
\subsection{The perturbed system}
\label{sec:Dissucssion on non-symmtric solutions}
This subsection aims to clarify the type of nonsymmetric solution we are looking for and introduce the corresponding notations.

In the previous subsection, we demonstrated that the Hele-Shaw problem can be easily solved under the symmetric assumption. In the subsequent subsections, we aim to show that by perturbing the vertical boundary with profiles possessing a certain symmetry, such as \( \cos{ly} \) (with \( l \in \mathbb{N} \)), and a small amplitude \( 0 < \epsilon \ll 1 \), nonsymmetric traveling wave solutions can be found near a specific nutrient consumption rate \( \lambda_0^l \) (which depends on \( l \)). These solutions satisfy the boundary value problem \eqref{eqn:general_all}, where the boundary profile is approximately \( \epsilon \cos{ly} \).

We aim to establish the existence of a perturbed tumor region \( \Omega_{\epsilon} \) with its boundary \( \mathcal{B}_\epsilon \), along with a corresponding consumption rate \( \lambda_{\epsilon} \), such that the solution to \eqref{eqn:general_all} under \( (\Omega_\epsilon, \mathcal{B}_\epsilon, \lambda_\epsilon) \) remains a steady traveling wave propagating to the right at a constant speed \( v_0(\lambda_\epsilon) \) (recall \eqref{eqn: invivo_v0}). We denote this solution by $(c^{\text{(i)}}_\epsilon, c^{\text{(o)}}_\epsilon, p^{\text{(i)}}_\epsilon, p^{\text{(o)}}_\epsilon)$, and define $(c_\epsilon, p_\epsilon)$ in the same way as \eqref{eqn:general_wholesoln}.

In fact, such nontrivial solution $(\xi_\epsilon,\lambda_\epsilon,c_\epsilon, p_\epsilon)$ can be constructed explicitly in terms of an elegant infinite series solution by using the boundary transformation technique in \cite{friedman353symmetry}. However, the construction and proof of convergence of such infinite series are very complicated. To avoid this tedious procedure, but justify the existence of such a nonsymmetric traveling wave solution, we adopt the Crandall-Rabinowitz framework proposed in \cite{borisovich2005symmetry}. 

The Crandall-Rabinowitz theorem is developed to study the bifurcation behavior in nonlinear equations. It provides conditions under which solutions branch off from trivial solutions in nonlinear functional equations. As an effective analysis tool, the Crandall-Rabinowitz theorem has been widely applied to study the existence and stability of solutions in nonlinear systems, see, e.g., \cite{borisovich2005symmetry, HMV2013, Diaz1987, Wei1997}. In particular, the framework of utilizing the Crandall–Rabinowitz theorem to study the bifurcation behavior of free boundary models was initially proposed by Friedman in \cite{borisovich2005symmetry}, then extensively employed to study the bifurcation phenomenon in different tumor growth models, see \cite{borisovich2005symmetry, friedman2007bifurcation, lu2022bifurcation, zhao2020symmetry,zhao2025determination}.

The advantage of the Crandall-Rabinowitz theorem is that instead of constructing all the terms inductively in the infinity series, we only need to investigate the linear part of the perturbation problem. More specifically, we construct a nonlinear functional map (see \eqref{eqn: bifurcation_fcnal} for a more precise version):
    \begin{equation*}
        F: X\times \mathbb{R} \to Y,\quad (\xi,\lambda) \mapsto F(\xi,\lambda)
    \end{equation*}
based on the leading order of the perturbation problem. Here, the periodic even function \( \xi(y)\) represents the boundary profile, and the parameter \( \lambda > 0 \) represents the nutrient consumption rate in the model. Moreover, $X$ and $Y$ are the function spaces for $\xi$ and $F(\xi,\lam)$ that will be specified more clearly later. The map ensures that the symmetric solution \( (0, \lambda) \) corresponds to the trivial solution of this map, that is, \( F(0, \lambda) = 0 \) for all positive $\lam$. While, the nonsymmetric solution 
\begin{equation*}
F(\xi_{\epsilon}, \lambda_{\epsilon}) = 0,
\end{equation*}
which we seek, corresponds to a bifurcation branch parameterized by the perturbation amplitude $\epsilon$. That is, it shall be viewed as a curve in the phase plane $(\xi,\lam)$ originating from some bifurcation point $(0, \lambda_0)$ with a tangent vector denoted by $(\tau(y), \tau_0)$ for now. Equivalently,
\begin{equation}
\label{eqn:nontriviasoln}
\xi_\epsilon(y) = \epsilon\tau(y) + O(\epsilon^2), \quad \lambda_{\epsilon} = \lambda_0+\epsilon\tau_0+O(\epsilon^2).
\end{equation}
It should be noted that to ensure the existence of such a branch, a necessary condition is that   
\begin{equation}
\label{eqn:totalderivative1} 
    \frac{d}{d\epsilon}F(\xi_\epsilon,\lam_\epsilon)\Big\vert_{\epsilon=0}=\frac{\partial F}{\partial \xi}(0,\lam_0)\tau(y)+\frac{\partial F}{\partial\lam}(0,\lam_0)\tau_0=0.
\end{equation}
Otherwise, the Implicit Function Theorem can be applied by viewing $\epsilon$ as the independent variable to show that the trivial (symmetric) solution has to be the unique solution. Observe the fact that $F(0,\lam)=0$ for all $\lam>0$ yields $\frac{\partial F}{\partial \lam}(0,\lam_0)\equiv 0$. Thus, condition \eqref{eqn:totalderivative1} reduces to
\begin{equation}
\label{eqn:totalderivative2} 
\frac{\partial F}{\partial \xi}(0,\lam_0)\tau(y)=0.
\end{equation}
The Crandall-Rabinowitz Theorem provides conditions on $\frac{\partial F}{\partial \xi}(0,\lam_0)$ such that ensure the existence of the desired branch. The main steps for applying the Crandall-Rabinowitz Theorem are as follows:
    \begin{enumerate}
    \item Fix any $\lam>0$, determine the Fr\'echet derivative of $F(\xi,\lambda)$ with respect to $\xi\in X$ at $(0,\lam)$, denote it as $F_{\xi}(0,\lambda):=\frac{\partial}{\partial\xi}F(0,\lambda)$; and show that the Fr\'echet derivative $F_{\xi}(0,\lambda)$, as a bounded linear operator that maps $X$ to $Y$, can be characterized in terms of an eigenvalue problem  (see equation \eqref{eqn: eigenprob}), with the complete basis of eigenfunctions are given by $\left\{\cos{ly}\right\}_{l=1}^{\infty }$ (the Shauder basis of $X$), and associated eigenvalues, given by $E(\lam,l)$ (see \eqref{eqn:E} for the expression), are distinct, non-degenerate, and real. 
    \item According to the last step, for any $\lam>0$ and when $\tau(y)$ coincide to a single basis function $\cos{ly}$, we have
    \begin{equation*}
        F_\xi(0,\lam)\cos{ly}=E(\lam,l)\cos{ly}.
    \end{equation*}
    Thus, by utilizing the explicit expression of the eigenvalues, for each perturbation frequency $l\in\mathbb{N}$, one can determine a specific consumption rate $\lambda_0^l>l^2$ (see Proposition \ref{rem:F1}) such that $E(\lam_0^l,l)=0$, so that \eqref{eqn:totalderivative2} holds. 
    \item To conclude that $\left\{(0,\lambda_0^l)\right\}_{l=1}^{\infty}$ are indeed bifurcation points that give rise to the desired nontrivial branch of the functional equation $F(\xi,\lambda) = 0$, we verify that the Fréchet derivative at these points, $F_{\xi}(0,\lambda_0^l)$, satisfies the assumptions in the Crandall–Rabinowitz theorem. Roughly speaking, the way of choosing $\lam_0^l$ ensures that $E(\lam_0^l,k)=0$ if and only if $k=l$.
\end{enumerate}
We establish the details of the above procedures in the following subsections.

\subsection{The linearized system}
\label{sec:The linearized system}
We devote this subsection to studying the linearization of \eqref{eqn:general_all} under a small perturbation. More specifically, we expand the solution in terms of the perturbation amplitude $0<\epsilon\ll 1$, and derive the equations with boundary conditions for the first-order terms. 

To begin with, we fix arbitrary $\lam>0$ and take $\xi(y)=\epsilon\tilde{\xi}(y)$, here $\tilde{\xi}(y)$ characterize the perturbation profile. Since the amplitude of the perturbation is small, the corresponding solution of \eqref{eqn:general_all}, $(c(x,y;\epsilon\tilde{\xi}),p(x,y;\epsilon\tilde{\xi}))$, processes the following formal expansion with respect to $\epsilon$:
\begin{subequations}
\label{eqn:second_order_expansion}
\begin{alignat}{2}
    c(x,y;\epsilon\tilde{\xi})&=c_0(x)+\epsilon c_1(x,y;\epsilon\tilde{\xi})+O(\epsilon^2),\\
    p(x,y;\epsilon\tilde{\xi})&=p_0(x)+\epsilon p_1(x,y;\epsilon\tilde{\xi})+O(\epsilon^2),
\end{alignat}
\end{subequations}
with the zero-order terms represent the solutions to the symmetric solution solved in Subsection \ref{sec:Symmetric solutions}. When there is no ambiguity, for simplicity of notation we hide the dependence of $\epsilon\tilde{\xi}$ in the latter section. Also recall that the solutions are the combination of the inner part and outer part, i.e. $c=c^{\text{(i)}}+c^{\text{(o)}}$, $p=p^{\text{(i)}}+p^{\text{(o)}}$, and analogously for
\begin{align}
\label{eqn:inandout}
c_0=c_0^{\text{(i)}}+c_0^{\text{(o)}},\quad
c_1=c_1^{\text{(i)}}+c_1^{\text{(o)}},\quad
p_0=p_0^{\text{(i)}}+p_0^{\text{(o)}},\quad
p_1=p_1^{\text{(i)}}+p_1^{\text{(o)}}.
\end{align}
Also notice that in this set up, the outer normal vector $n$ on the free boundary $\mathcal{B}_\xi$ is given by
\begin{equation}
\label{eqn:normalvector}
    n(\epsilon\Tilde{\xi}(y),y)=\left(\frac{1}{\sqrt{1+(\epsilon\Tilde{\xi}'(y))^2}},\frac{-\epsilon\Tilde{\xi}'(y)}{\sqrt{1+(\epsilon\Tilde{\xi}'(y))^2}}\right)=\left(1+O(\epsilon^2),-\epsilon\Tilde{\xi}'(y)+O(\epsilon^3)\right).
\end{equation}
Utilizing the expansion \eqref{eqn:second_order_expansion}, expression \eqref{eqn:normalvector}, and Taylor expansion, one can evaluate $c^{\text{(i)}}$ in the following way 
\begin{subequations}
\begin{alignat}{2}
    c^{\text{(i)}}\vert_{\mathcal{B}_{\xi}}
    &=c^{\text{(i)}}(\epsilon\Tilde{\xi},y)\\\nonumber
    &=c_0^{\text{(i)}}(\epsilon\Tilde{\xi})+\epsilon c_1^{\text{(i)}}(\epsilon\Tilde{\xi},y)+O(\epsilon^2)\\\nonumber
    &=c_0^{\text{(i)}}(0)+\epsilon\Tilde{\xi}\frac{\partial}{\partial x}c_0^{\text{(i)}}(0)+\epsilon  c_1^{\text{(i)}}(0,y)+O(\epsilon^2),
\end{alignat}
and the normal derivative $\frac{\partial}{\partial n}c^{\text{(i)}}$ is given by 
\begin{align}
   \label{eqn:normalderv}
    \frac{\partial}{\partial n}c^{\text{(i)}}\vert_{\mathcal{B}_{\xi}}
    &=\left(\frac{\partial}{\partial x}c^{\text{(i)}}(\epsilon\Tilde{\xi},y),\frac{\partial}{\partial y}c^{\text{(i)}}(\epsilon\Tilde{\xi},y)\right)\cdot n(\epsilon\Tilde{\xi}(y),y)\\\nonumber
    &=\left(\frac{\partial}{\partial x}c_0^{\text{(i)}}(\epsilon\Tilde{\xi})+\epsilon \frac{\partial}{\partial x}c_1^{\text{(i)}}(\epsilon\Tilde{\xi},y)+O(\epsilon^2),\epsilon \frac{\partial}{\partial y}c_1^{\text{(i)}}(\epsilon\Tilde{\xi},y)+O(\epsilon^2)\right)\cdot n(\epsilon\Tilde{\xi}(y),y)\\\nonumber
    &=\frac{\partial}{\partial x}c_0^{\text{(i)}}(0)+\epsilon\Tilde{\xi}\frac{\partial^2}{\partial x^2}c_0^{\text{(i)}}(0)+\epsilon \frac{\partial}{\partial x}c_1^{\text{(i)}}(0,y)+O(\epsilon^2).    
\end{align}
Similarly, for $ c^{\text{(o)}}$, $p^{\text{(i)}}$, and $p^{\text{(o)}}$ we have 
\begin{alignat}{2}
c^{\text{(o)}}\vert_{\mathcal{B}_{\xi}}
&=c_0^{\text{(o)}}(0)+\epsilon\Tilde{\xi}\frac{\partial}{\partial x}c_0^{\text{(o)}}(0)+\epsilon c_1^{\text{(o)}}(0,y)+O(\epsilon^2),\\
\frac{\partial}{\partial n}c^{\text{(o)}}\vert_{\mathcal{B}_{\xi}}
&=\frac{\partial}{\partial x}c_0^{\text{(o)}}(0)+\epsilon\Tilde{\xi}\frac{\partial^2}{\partial x^2}c_0^{\text{(o)}}(0)+\epsilon \frac{\partial}{\partial x}c_1^{\text{(o)}}(0,y)+O(\epsilon^2),\\
p^{\text{(i)}}\vert_{\mathcal{B}_{\xi}}
&= p^{\text{(i)}}_0(0)+\epsilon\Tilde{\xi}\frac{\partial}{\partial x}p^{\text{(i)}}_0(0)+\epsilon p^{\text{(i)}}_1(0,y)+O(\epsilon^2),\\
\frac{\partial}{\partial n}p^{\text{(i)}}\vert_{\mathcal{B}_{\xi}}&=\frac{\partial}{\partial x}p^{\text{(i)}}_0(0)+\epsilon\Tilde{\xi}\frac{\partial^2}{\partial x^2}p^{\text{(i)}}_0(0)+\epsilon \frac{\partial}{\partial x}p^{\text{(i)}}_1(0,y)+O(\epsilon^2),\\
p^{\text{(o)}}\vert_{\mathcal{B}_{\xi}}
&= p^{\text{(o)}}_0(0)+\epsilon\Tilde{\xi}\frac{\partial}{\partial x}p^{\text{(o)}}_0(0)+\epsilon p^{\text{(o)}}_1(0,y)+O(\epsilon^2),\\
\frac{\partial}{\partial n}p^{\text{(o)}}\vert_{\mathcal{B}_{\xi}}
&=\frac{\partial}{\partial x}p^{\text{(o)}}_0(0)+\epsilon\Tilde{\xi}\frac{\partial^2}{\partial x^2}p^{\text{(o)}}_0(0)+\epsilon \frac{\partial}{\partial x}p^{\text{(o)}}_1(0,y)+O(\epsilon^2).
\end{alignat}
\end{subequations}
Plugging the expansion \eqref{eqn:second_order_expansion} into \eqref{eqn:general_all}, the zero-order terms are canceled out, and we collect the terms of order $O(\epsilon)$. Regarding the nutrient, the first order terms solve the following boundary value problem
\begin{subequations}
\label{eqn: invivo_general_linearpart_n}
\begin{alignat}{2}
-\Del c_1^{\text{(i)}} + \lam c_1^{\text{(i)}} &= 0,\\
-\Del c_1^{\text{(o)}} + c_1^{\text{(o)}} &= 0,\\
c_1^{\text{(i)}}(0,y)&=c_1^{\text{(o)}}(0,y),\\
\Tilde{\xi}\cdot\frac{\partial^2}{\partial x^2}c_0^{\text{(i)}}(0)+\frac{\partial}{\partial x}c_1^{\text{(i)}}(0,y)
&=\Tilde{\xi}\cdot\frac{\partial^2}{\partial x^2}c_0^{\text{(o)}}(0)+\frac{\partial}{\partial x}c_1^{\text{(o)}}(0,y),\\
c_1^{\text{(i)}}(-\infty,y)&<\infty,\\
c_1^{\text{(o)}}(+\infty,y)&<\infty.
\end{alignat}
\end{subequations}
While, for the pressure, the first order terms solve
\begin{subequations}
\label{eqn: invivo_general_linearpart_p}
\begin{alignat}{2}
-\Del p^{\text{(i)}}_1 &= G_0 c_1^{\text{(i)}},\\
-\Del p^{\text{(o)}}_1 &= G_0 c_1^{\text{(o)}},\\
\Tilde{\xi}\cdot\frac{\partial}{\partial x}p^{\text{(i)}}_0(0)+p^{\text{(i)}}_1(0,y) &=\Tilde{\xi}\cdot\frac{\partial}{\partial x}p^{\text{(o)}}_0(0)+p^{\text{(o)}}_1(0,y)= 0,\\
\Tilde{\xi}\cdot\frac{\partial^2}{\partial x^2}p_0^{\text{(i)}}(0)+\frac{\partial}{\partial x}p_1^{\text{(i)}}(0,y)
&=\Tilde{\xi}\cdot\frac{\partial^2}{\partial x^2}p_0^{\text{(o)}}(0)+\frac{\partial}{\partial x}p_1^{\text{(o)}}(0,y),\\
p^{\text{(i)}}_1(-\infty,y)&<\infty.
\end{alignat}
\end{subequations}
By now, we finished deriving the equations and boundary conditions for the first-order terms. 

In the next section, we further show that when the boundary profile $\Tilde{\xi}(y)$ is close to $\cos{ly}$, the first-order terms can be solved explicitly, from which one can figure out the so-called boundary evolution function.

\subsection{Single mode perturbation}
\label{sec:The linearized perturbed solutions}
We further investigate the situation when the perturbed profile is in the form of $\Tilde{\xi}(y)=\cos{ly}+O(\epsilon)$. In this context, the first-order terms presented in the last subsection, $(c^{\text{(i)}}_1(x,y), p^{\text{(i)}}_1(x,y), c^{\text{(o)}}_1(x,y), p^{\text{(o)}}_1(x,y))$, can be solved explicitly, and from this, we further determine the boundary evolution equation (see \eqref{eqn:E} for specific formula). This evolution function characterizes the boundary evolution tendency, and the properties of it play a critical role in determining the bifurcation point (see equation \eqref{eqn:bifurcation point}) and verifying the conditions in Crandrall-Rabinowitz Theorems in the later sections. Thus, we investigate it at the end of this subsection (see Proposition \ref{rem:F1}).

The first-order terms capture the main reaction to the perturbation. When $\xi(y)=\epsilon\cos{ly}+O(\epsilon^2)$, the first order terms in \eqref{eqn:second_order_expansion} can be separated in the following way (see Section 3.2 in \cite{feng2022tumor} for a more rigorous justification):
\begin{subequations}
\label{eqn:separation_linear_invivo}
\begin{alignat}{2}
    c_1^{\text{(i)}}(x,y;\epsilon\Tilde{\xi})&=\Tilde{c}_{1,l}^{\text{(i)}}(x)\cos{ly},\qquad c_1^{\text{(o)}}(x,y;\epsilon\Tilde{\xi})&=\Tilde{c}_{1,l}^{\text{(o)}}(x)\cos{ly},\\
    p^{\text{(i)}}_1(x,y;\epsilon\Tilde{\xi})&=\Tilde{p}_{1,l}^{\text{(i)}}(x)\cos{ly},\qquad p^{\text{(o)}}_1(x,y;\epsilon\Tilde{\xi})&=\Tilde{p}_{1,l}^{\text{(o)}}(x)\cos{ly}.
\end{alignat}
\end{subequations}
By plugging  \eqref{eqn:separation_linear_invivo} into \eqref{eqn: invivo_general_linearpart_n} and \eqref{eqn: invivo_general_linearpart_p}, the equations reduce to
\begin{subequations}
\begin{alignat}{2}
-\partial_x^2 \Tilde{c}_{1,l}^{\text{(i)}}+(\lambda+l^2)\Tilde{c}_{1,l}^{\text{(i)}}&=0,&\quad \text{for}\quad x\leq 0,\\
-\partial_x^2 \Tilde{c}_{1,l}^{\text{(o)}}+(1+l^2)\Tilde{c}_{1,l}^{\text{(o)}}&=0,&\quad \text{for}\quad x\geq 0,\\
-\partial_x^2 \Tilde{p}_{1,l}^{\text{(i)}}+l^2\Tilde{p}_{1,l}^{\text{(i)}} &= G_0  \Tilde{c}_{1,l}^{\text{(i)}},&\quad \text{for}\quad x\leq 0,\\
-\partial_x^2 \Tilde{p}_{1,l}^{\text{(o)}}+l^2\Tilde{p}_{1,l}^{\text{(o)}} &= G_0  \Tilde{c}_{1,l}^{\text{(o)}},&\quad \text{for}\quad x\geq 0,
\end{alignat}
and the boundary conditions reduce to
\begin{alignat}{2}
\partial_x c_0^{\text{(i)}}(0)+\Tilde{c}_{1,l}^{\text{(i)}}(0)&=
\partial_x c_0^{\text{(o)}}(0)+\Tilde{c}_{1,l}^{\text{(o)}}(0),\\
\partial_x^2 c_0^{\text{(i)}}(0)+\partial_x\Tilde{c}_{1,l}^{\text{(i)}}(0)&=
\partial_x^2 c_0^{\text{(o)}}(0)+\partial_x\Tilde{c}_{1,l}^{\text{(o)}}(0),\\
\partial_x p_0^{\text{(i)}}(0)+\Tilde{p}_{1,l}^{\text{(i)}}(0)&=\partial_x p_0^{\text{(o)}}(0)+\Tilde{p}_{1,l}^{\text{(o)}}(0)=0,\\
\partial_x^2 p_0^{\text{(i)}}(0)+\partial_x\Tilde{p}_{1,l}^{\text{(i)}}(0)&=
\partial_x^2 p_0^{\text{(o)}}(0)+\partial_x\Tilde{p}_{1,l}^{\text{(o)}}(0),
\end{alignat}
and with,
\begin{equation}
c_1^{\text{(i)}}(-\infty)<\infty,\quad c_1^{\text{(o)}}(+\infty)<\infty,\quad p_1^{\text{(i)}}(-\infty)<\infty,
\end{equation}
\end{subequations}
recall that we do not require $ p_1^{\text{(o)}}(+\infty)<\infty$ as mentioned before. Finally, by solving the above boundary value problems, we get:
\begin{subequations}
\label{eqn: linear_soln_invivo}
\begin{alignat}{2}
\label{eqn:invivo_c1i}
\Tilde{c}_{1,l}^{\text{(i)}}(x)&=-\frac{\sqrt{\lam}\cdot c_B}{\sqrt{\lam+l^2}+\sqrt{1+l^2}}e^{\sqrt{\lam+l^2}x},\\
\Tilde{c}_{1,l}^{\text{(o)}}(x)&=-\frac{\sqrt{\lam}\cdot c_B}{\sqrt{\lam+l^2}+\sqrt{1+l^2}}e^{-\sqrt{1+l^2}x},\\
\Tilde{p}_{1,l}^{\text{(i)}}(x)&=\frac{G_0c_B}{\sqrt{\lam}}\left(\left(\frac{1}{\sqrt{\lambda}+1}-\frac{1}{\sqrt{\lambda+l^2}+\sqrt{1+l^2}}\right)e^{l x}+\frac{e^{\sqrt{\lam+l^2}x}}{\sqrt{\lam+l^2}+\sqrt{1+l^2}}\right),\\
\Tilde{p}_{1,l}^{\text{(o)}}(x)&=-G_0\Tilde{c}_{1,l}^{\text{(o)}}(x)+C_1e^{lx}+C_2e^{-lx},
\label{eqn:p_1(x)}
\end{alignat}
\end{subequations}
where $C_1$ and $C_2$ are some constants that can be solved explicitly and do not contribute to the proofs later, so we omit their expressions here.

Observe that,  when $\xi(y)=\epsilon\Tilde{\xi}(y)=\epsilon\cos{ly}+O(\epsilon^2)$, the normal speed on the boundary, $v(\xi(y),y)$, is governed by Darcy's law as follows:
\begin{align}
\label{eqn:single_speed}
v(\xi(y),y)
&=-\frac{\partial p^{\text{(i)}}}{\partial n}(\xi(y),y)\\\nonumber
&=-\partial_x p^{\text{(i)}}_0(0)-\left(\partial_x^2 p_0^{\text{(i)}}(0)+\partial_x\Tilde{p}_{1,l}^{\text{(i)}}(0)\right)\epsilon\cos{ly}+O(\epsilon^2)\\\nonumber
&=v_0(\lam)+E(\lam,l)\epsilon\cos{ly}+O(\epsilon^2),
\end{align}
where $v_0(\lam)$ was defined in \eqref{eqn: invivo_v0}, and the expression of $E(\lam,l)$ is given by
\begin{align}
\label{eqn:E}
E(\lambda,l):=\frac{G_0 c_B}{\sqrt{\lambda}}\left(\frac{\sqrt{\lambda}-l}{\sqrt{\lambda}+1}+\frac{l-\sqrt{\lambda+l^2}}{\sqrt{\lambda+l^2}+\sqrt{1+l^2}}\right).
\end{align}
Intuitively, the zero-order term \( v_0(\lambda) \) represents the propagation speed of the wave, while \( E(\lambda, l) \) is the so-called boundary evolution equation, which describes the evolution of the perturbation amplitude. If \( E(\lambda, l) > 0 \), the amplitude tends to grow, and if it takes negative values, the amplitude will decay, as discussed in Section 4 of \cite{feng2022tumor}. Therefore, for a suitable \( \lambda_0^l \) (which will later serve as the bifurcation point) such that \( E(\lambda_0^l, l) = 0 \), the perturbation amplitude is stable up to \( O(\epsilon) \). It is thus reasonable to expect that nonsymmetric solutions can be found near \( \xi(y) = \epsilon \cos{ly} \) and \( \lambda = \lambda_0^l \) by performing higher-order corrections. To substantiate the existence of such \( \lambda_0^l \) and verify some useful properties for the Crandall-Rabinowitz theorem later, we plot \( E(\lambda, l) \) in Figure 1. Based on the graphs, we conclude that $E(\lam,l)$ exhibits the following properties.
\begin{figure}
\label{fig:F1}
  \begin{center}
    \includegraphics[width=0.49\textwidth]{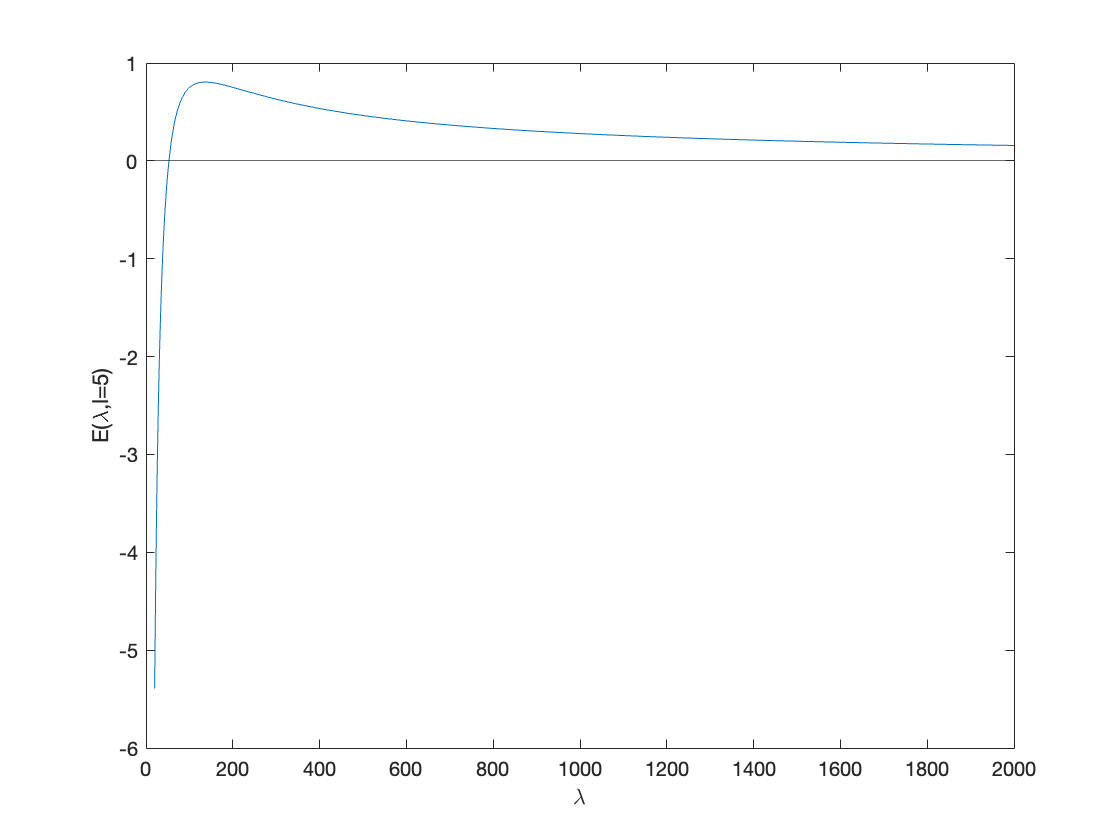}
    \includegraphics[width=0.49\textwidth]{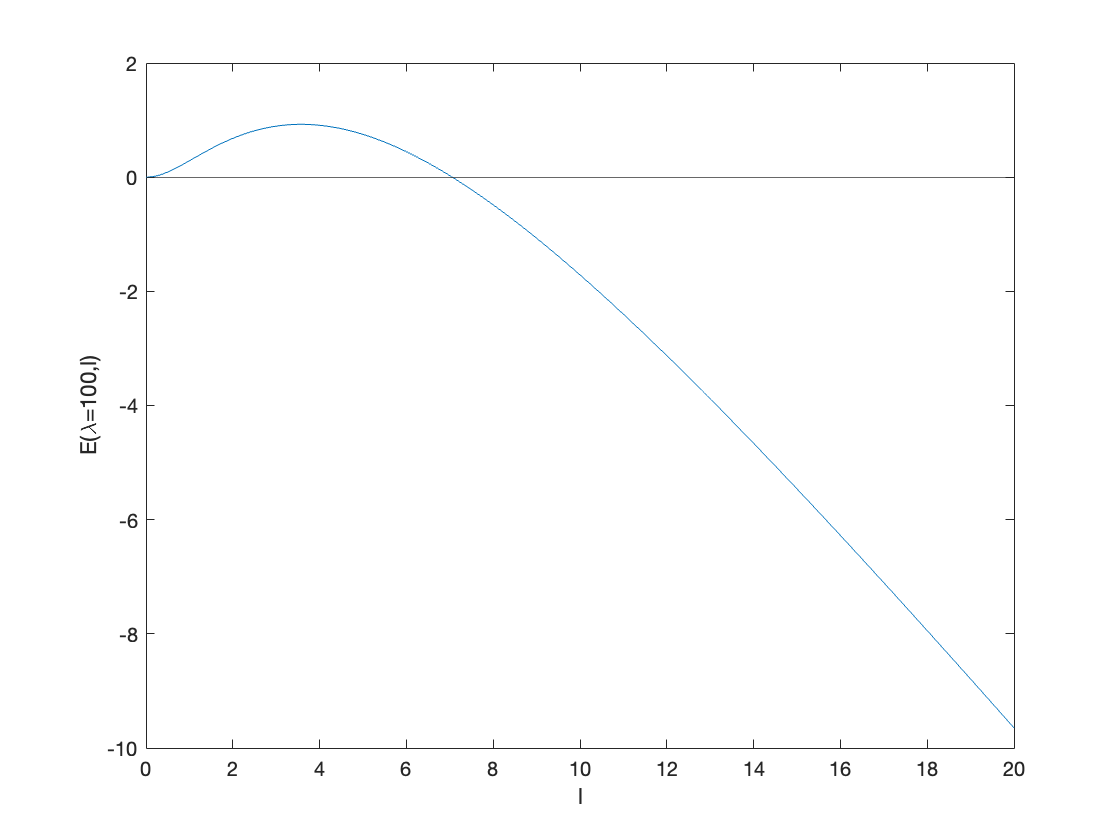}
    \caption{Fix $G_0\cdot c_B=100$. Left: plot for $E(\lam,l)$ with $l=5$ and $\lam\in[0,2000]$. Right: plot for $E(\lam,l)$ with $\lam=100$ and $l\in[0,20]$.}
  \end{center}
\end{figure}

\begin{prop}
\label{rem:F1}
 (1) Given any integer $l>0$, there exists unique $\lam_0^l>l^2$ such that 
    \begin{equation}
    \label{eqn:bifurcation point}
    E(\lam_0^l,l)=\frac{G_0 c_B}{\sqrt{\lam_0^l}}\left(\frac{\sqrt{\lam_0^l}-l}{\sqrt{\lam_0^l}+1}+\frac{l-\sqrt{\lam_0^l+l^2}}{\sqrt{\lam_0^l+l^2}+\sqrt{1+l^2}}\right)=0.
    \end{equation}

   (2) Given any $\lam>1$, there exists a unique $l_0>0$, which may not be an integer, such that $E(\lam, l_0)=0$. Furthermore, $E(\lam,l)\neq 0$ for any $l\neq 0$ or $l\neq l_0$. 

\end{prop}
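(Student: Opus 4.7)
The plan is to reduce the transcendental equation $E(\lambda,l)=0$ to a single explicit algebraic relation between rational parametrizations of $\sqrt{\lambda}$ and $l$, from which both existence and uniqueness follow by the monotonicity of one explicit function.

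First I would factor out $G_0 c_B/\sqrt{\lambda}>0$, so that $E=0$ is equivalent to $f(\lambda,l)=0$, where $f$ is the bracketed factor. Writing $u=\sqrt{\lambda}$, $v=\sqrt{1+l^2}$ (so $l^2=v^2-1$), I would clear denominators in $f=0$, isolate $\sqrt{\lambda+l^2}$, and square to eliminate it. The identity $l^2=v^2-1$ then reduces the resulting polynomial relation to one that is \emph{linear} in $l$, namely
\begin{equation*}
(v+1)(u+1-2v)=2(v+1-u)\,l.
\end{equation*}
Next, the Weierstrass-type substitution $\mu:=l/(1+\sqrt{1+l^2})\in(0,1)$, equivalent to $\mu^2=(v-1)/(v+1)$, furnishes the rational parametrization $v=(1+\mu^2)/(1-\mu^2)$, $l=2\mu/(1-\mu^2)$, and the map $l\mapsto\mu$ is a smooth strictly increasing bijection $(0,\infty)\to(0,1)$ with derivative $1/[\sqrt{1+l^2}(1+\sqrt{1+l^2})]$. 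Plugging these identities in and cancelling the common factor $(1+\mu)$ on both sides, the linear relation collapses to the clean identity
\begin{equation*}
u=g(\mu):=\frac{1+3\mu}{(1-\mu)(1+2\mu)}.
\end{equation*}

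A direct computation gives $g'(\mu)=(2+4\mu+6\mu^2)/[(1-\mu)(1+2\mu)]^2>0$ on $(0,1)$, with $g(0)=1$ and $g(\mu)\to+\infty$ as $\mu\to 1^-$, so $g$ is a strictly increasing bijection $(0,1)\to(1,\infty)$. Both parts then follow. For Part~(1): given $l>0$, set $\mu=\mu(l)$; then $\lambda_0^l=g(\mu)^2$ is the unique root, and the inequality $\lambda_0^l>l^2$ is equivalent to $u/l=(1+3\mu)(1+\mu)/[2\mu(1+2\mu)]>1$, i.e., $\mu^2-2\mu-1<0$, which holds throughout $(0,1)$. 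For Part~(2): given $\lambda>1$ (equivalently $u>1$), strict monotonicity of $g$ produces a unique $\mu=g^{-1}(u)\in(0,1)$ and hence a unique positive root $l_0=2\mu/(1-\mu^2)$; the bijectivity of the parametrization precludes any other positive zero.

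The main obstacle I anticipate is verifying that no extraneous branch is introduced by the squaring step used to clear $\sqrt{\lambda+l^2}$. The key observation is that after substituting $l^2=v^2-1$ the squared equation is \emph{linear} (rather than quadratic) in $l$, so only one branch survives; a concrete spot-check (for instance $\mu=1/2$, giving $(\lambda,l)=(25/4,4/3)$) confirms that the derived $u=g(\mu)$ indeed satisfies $f(\lambda,l)=0$ exactly, ruling out spurious solutions.
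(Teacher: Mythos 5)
Your proof is correct and takes a genuinely different route from the paper's. The paper gives only a sketch: for Part (1) it checks the sign $E(\lambda,l)<0$ for $\lambda\le l^2$, uses the limits $E\to-\infty$ as $\lambda\to0$ and $E\to0^+$ as $\lambda\to\infty$ for existence via the intermediate value theorem, and asserts uniqueness ``by directly checking its derivative''; Part (2) is deferred to the earlier reference and again left to a derivative check. Your reduction to the explicit strictly increasing bijection $u=g(\mu)$ replaces two separate, only partially spelled-out monotonicity checks (one in $\lambda$, one in $l$) with a single closed-form relation that delivers existence, uniqueness, and the bound $\lambda_0^l>l^2$ in one stroke, and it works for all real $l>0$ rather than only integers $\ge 2$. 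I verified the factorization: $(v+1)(u+1-2v)=2(v+1-u)l$ does emerge after squaring and cancelling the common factors $(v-1)$ and $(u+1)$, the substitution $v=(1+\mu^2)/(1-\mu^2)$, $l=2\mu/(1-\mu^2)$ indeed collapses it (after cancelling $(1+\mu)$) to $u=g(\mu)$, and $g'>0$, $g(0)=1$, $g(1^-)=+\infty$ are all correct.

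The one place to tighten is the remark that ``the squared equation is linear in $l$, so only one branch survives.'' Linearity in $l$ controls the \emph{number} of solutions but not whether a root is spurious; that is a sign question. The clean repair is: before squaring you have $\sqrt{\lambda+l^2}\,(l+1)=A$ with $A:=u(v+l)+l(1-v)$, so the original equation is the squared one \emph{together with} $A>0$. Along the curve $u=g(\mu)$, $A^2=(u^2+l^2)(l+1)^2>0$, so $A$ never vanishes, and $A$ is continuous in $\mu\in(0,1)$; your spot-check at $\mu=\tfrac12$ shows $A>0$ there, hence $A>0$ on the whole curve by the intermediate value theorem, and no extraneous branch occurs. (Alternatively, one can verify $A>0$ directly: substituting the parametrization gives
$A=\dfrac{1+6\mu+12\mu^2+6\mu^3-5\mu^4}{(1-\mu)^2(1+2\mu)(1+\mu)^2}$,
and the numerator is positive on $(0,1)$ since $6\mu^3-5\mu^4=\mu^3(6-5\mu)>0$.) Note also that for uniqueness you only need the forward implication $E=0\Rightarrow u=g(\mu)$, which is unaffected by squaring; the sign check is needed only for existence. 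With that caveat made explicit, the argument is complete and, in my view, more illuminating than the paper's.
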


    The fact $\lam_0^l>l^2$ holds, since one can easily check that 
    \begin{equation*}
       E(\lam,l)< \frac{G_0 c_B}{\sqrt{\lam}}\left(\frac{l-\sqrt{\lam+l^2}}{\sqrt{\lam+l^2}+\sqrt{1+l^2}}\right)<0,\quad\text{for any \,$0<\lam\leq l^2$.}
    \end{equation*}
   
The existence of $\lam_0^l$ can be shown by checking the limits and applying the intermediate value theorem
   \begin{align*}
       \lim_{\lam\rightarrow 0}E(\lam,l)&=-\infty,\\
       \lim_{\lam\rightarrow +\infty}E(\lam,l)&=\lim_{x\rightarrow0^+}E(\tfrac{1}{x^2},l)=\lim_{x\rightarrow 0^+}G_0 c_B(\sqrt{1+l^2}-1)x^2+o(x^2)=0^+.
   \end{align*}
    And, the existence of $l_0$ was proved similarly in \cite{feng2022tumor} (see Corollary 3) by checking the asymptote. However, to show uniqueness of $\lam
    _0^l$ and $l_0$ one needs to study the behavior of the derivatives, which are given by
\begin{align}
\label{eqn:partiallamE}
\partial_{\lam}E(\lam,l)
&=-\frac{G_0 c_B}{2\lambda^{3/2}}\left(\frac{\sqrt{\lambda}-l}{\sqrt{\lambda}+1}+\frac{l-\sqrt{\lambda+l^2}}{\sqrt{\lambda+l^2}+\sqrt{1+l^2}}\right)\\\nonumber
&\quad+\frac{G_0 c_B}{2\sqrt{\lam}}\left(\frac{l+1}{\sqrt{\lam}(\sqrt{\lam}+1)^2}-\frac{l+\sqrt{1+l^2}}{\sqrt{\lam+l^2}(\sqrt{\lam+l^2}+\sqrt{1+l^2})^2}\right);\\
\partial_{l}E(\lam,l)
&=\frac{G_0 c_B}{\sqrt{\lam}}\left(\frac{-1}{\sqrt{\lam}+1}+\frac{(\sqrt{1+l^2}+l)(\sqrt{\lam+l^2}-l)}{(\sqrt{1+l^2}+\sqrt{\lam+l^2})\sqrt{\lam+l^2}\sqrt{1+l^2}}\right).
\end{align}
Rigorous verification of the sign of the above expression is difficult, but elementary, and does not deepen the understanding of the main content. In addition, it can be easily verified by plotting the curves using the above explicit expressions. Therefore, we decided to omit this part of the proof.

\subsection{A nonlinear functional and its Fr\'{e}chet derivative.}
We introduce a nonlinear functional map inspired by \eqref{eqn:single_speed}, reducing the problem of proving the existence of nonsymmetric traveling wave solutions for \eqref{eqn:general_all} to verifying that the kernel of this map contains nontrivial bifurcation branch. To achieve this, we compute its Fréchet derivative and demonstrate that it can be formulated as an eigenvalue problem, where the eigenvalues coincide with $E(\lam,l)$; see \eqref{eqn: eigenprob}. This facilitates the verification of the assumptions in the Crandall-Rabinowitz theorem later.

To begin with, we introduce the functional spaces for the boundary profile $\xi(y)$:
\begin{subequations}
\label{def: two_spaces}
\begin{alignat}{2}
    X^{m+\alpha}&=\left\{f(y)\in C^{m+\alpha}: f(y)\text{ is }2\pi\text{ periodic and even}\right\},\\
    X_1^{m+\alpha}&=\text{closure of the linear space spanned by } \{\cos{jy}, j=0,1,2,\dots\} \text{ in } X^{m+\alpha}.
\end{alignat}
\end{subequations}
Note that all modes are included. Thus, any $\xi(y)\in X_1^{m+\alpha}$ can be represented as a Fourier series. 

Inspired by the calculations in \eqref{eqn:single_speed}, we introduce the following nonlinear functional map: 
\begin{equation}
\label{eqn: bifurcation_fcnal}
F: X_1^{3+\alpha}\times \mathbb{R} \to X_1^{2+\alpha},\quad  (\xi,\lambda) \mapsto F(\xi,\lambda):=-\frac{\partial p}{\partial x}(\xi(y),y;\xi)-v_0(\lam).    
\end{equation}
Regarding the expression of $F(\xi,\lam)$, $p(x,y;\xi)$ stands for the pressure function in \eqref{eqn:general_all} associated with the boundary profile $\xi(y)\in X_1^{3+\alpha}$ and the nutrient consumption rate $\lam>0$; and $v_0(\lam)$, defined in \eqref{eqn: invivo_v0}, represents the travel speed of the solution. 

By a computation similar to that in \eqref{eqn:normalderv}, we observe that the normal derivative of $p$, which determines the boundary moving speed, is approximately $-\frac{\partial p}{\partial x}$, with an error of order $O(\epsilon^2)$. Consequently, when the perturbation amplitude is small, $F$ quantifies the difference between the boundary moving speed of the perturbed problem and the symmetric traveling wave speed associated with $\lambda$. Following a similar argument as in \cite{borisovich2005symmetry} (see Equation (3.20)) and \cite{lu2022bifurcation} (see Equation (27)), we assert that $(\xi,\lambda)$ forms a traveling wave solution to \eqref{eqn:general_all} if and only if
\begin{equation} 
\label{eqn: F=0} 
F(\xi,\lambda)=0. 
\end{equation}
The symmetric solutions naturally correspond to the trivial solution $(0,\lambda)$, while nonsymmetric solutions $(\xi_\epsilon,\lambda_\epsilon)$, as defined in \eqref{eqn:nontriviasoln}, correspond to nontrivial solutions of \eqref{eqn: F=0}.

Next, we determine the Fr\'{e}chet derivative of $F(\xi,\lam)$ (with respect to $\xi$) at $(0,\lam)$, denoted as $F_{\xi}(0,\lam)$. To do this, one needs to rigorously justify the expansion in \eqref{eqn:second_order_expansion}, which is given by the following two lemmas.
\begin{lem}
\label{lem:justification_I}
For any nutrient consumption rate $\lam>0$, perturbation amplitude $0<\epsilon\ll 1$, and perturbation profile $\Tilde{\xi}(y)\in C^{3+\alpha}(\mathbb{R})$, let $(c,p)$ be the solution of the corresponding perturbation problem defined in \eqref{eqn:second_order_expansion}, then
\begin{align*}
    \left\Vert c(x,y;\epsilon\Tilde{\xi})-c_0(x)\right\Vert_{C^{1+\alpha}(\Omega)}&\leq C\vert\epsilon\vert\Vert\Tilde{\xi}\Vert_{C^{3+\alpha}(\mathbb{R})},\\
    \left\Vert p(x,y;\epsilon\Tilde{\xi})-p_0(x)\right\Vert_{C^{3+\alpha}(\Omega)}&\leq C\vert\epsilon\vert\Vert\Tilde{\xi}\Vert_{C^{3+\alpha}(\mathbb{R})},
\end{align*}
where $C$ is a constant independent of $\epsilon$, and $(c_0, p_0)$ stands for the unperturbed solutions given in \eqref{eqn: invivo_unperturbed_solns}.
\end{lem}
\begin{lem}
\label{lem:justification_II}
For any nutrient consumption rate $\lam>0$, perturbation amplitude $0<\epsilon\ll 1$, and perturbation profile $\Tilde{\xi}(y)\in C^{3+\alpha}(\mathbb{R})$, let $(c,p)$ be the solution of the corresponding perturbation problem defined in \eqref{eqn:second_order_expansion}, then
\begin{align*}
    \Vert c(x,y;\epsilon\Tilde{\xi})-c_0(x)-\epsilon \hat{c}_1(x,y;\epsilon\Tilde{\xi})\Vert_{C^{1+\alpha}(\Omega)}&\leq C\vert\epsilon\vert^2\Vert\Tilde{\xi}\Vert_{C^{3+\alpha}(\mathbb{R})},\\
    \Vert p(x,y;\epsilon\Tilde{\xi})-p_0(x)-\epsilon \hat{p}_1(x,y;\epsilon\Tilde{\xi})\Vert_{C^{3+\alpha}(\Omega)}&\leq C\vert\epsilon\vert^2\Vert\Tilde{\xi}\Vert_{C^{3+\alpha}(\mathbb{R})},
\end{align*}
where $C$ is a constant independent of $\epsilon$, and $(c_0, p_0)$ stands for the unperturbed solutions, $(\hat{c}_1, \hat{p}_1)$ corresponds to the Hanzawa transformation (see \eqref{eqn: HZ transform} and \eqref{eqn:twohat}) of the first-order terms.
\end{lem}
The proof of the above lemmas is standard but cumbersome. For the sake of exposition, we provide
a sketch of the proof in Appendix \ref{Justification of the expansion}. Using the above lemmas, one can determine the Fr\'{e}chet derivative $F_\xi(0,\lam)$, we summarize it in the following proposition.
\begin{prop}
\label{lem:Fder}
With the same assumptions as in Lemma \ref{lem:justification_I} and Lemma \ref{lem:justification_II}, and $\xi(y)=\epsilon\Tilde{\xi}(y)$. Then, the Fr\'{e}chet derivative $F_\xi(0,\lam)$ is given by 
\begin{equation}
\label{eqn:Fderv0}
    \left[F_{\xi}(0,\lam)\right]\Tilde{\xi}=-\Tilde{\xi}\frac{\partial^2 p_0}{\partial x^2}(0)-\frac{\partial \hat{p}_1}{\partial x}(0,y;\epsilon\Tilde{\xi}),
\end{equation}
where $p_0$ is given in \eqref{eqn: invivo_unperturbed_solns}, and $\hat{p}_1(x,y;\epsilon\Tilde{\xi})$ is defined in \eqref{eqn: HZ transform}. If $\Tilde{\xi}(y)$ further belongs to $X_1^{3+\alpha}$, i.e.,  $\Tilde{\xi}(y)$ can be represented as a Fourier series, denoted as $\Tilde{\xi}(y)=\sum_{l=1}^{\infty} a_l\cos{ly}$. Then,  \eqref{eqn:Fderv0} reduce to
\begin{equation}
\label{eqn:Fderv1}
    \left[F_{\xi}(0,\lam)\right]\Tilde{\xi}
    =\sum_{l=1}^{\infty} a_l E(\lam,l)\cos{ly},
\end{equation}
where the eigenvalue $E(\lam,l)\in\mathbb{R}$ is given by \eqref{eqn:E}. 
\end{prop}
\begin{rem}
\label{rem:linearity}
The operator defined in \eqref{eqn:Fderv0} is linear. In particular, the second term on the right-hand side of \eqref{eqn:Fderv0} shall be viewed as a nonlocal linear operator that maps the shape of the boundary $\xi(y)=\epsilon\Tilde{\xi}(y)$ to $-\frac{\partial \hat{p}_1}{\partial x}(0,y;\epsilon\Tilde{\xi})$. And the linearity can be seen more clearly when $\Tilde{\xi}$ belongs further to $X_1^{3+\alpha}$, as shown in \eqref{eqn:Fderv1}.
\end{rem}
\label{rem:eproblem}
\begin{rem}
When $\Tilde{\xi}(y)\in X_1^{3+\alpha}$, the Fr\'{e}chet derivative $F_\xi(0,\lam)$ is characterized by the eigenvalue problem:
\begin{equation}
\label{eqn: eigenprob}
\left[F_{\xi}(0,\lam)\right]\cos{ly}=E(\lam,l)\cos{ly}.
\end{equation}
\end{rem}
In the following, we provide a proof of Proposition \ref{lem:Fder}.
\begin{proof}
By using Lemma \ref{lem:justification_I} and Lemma \ref{lem:justification_II}, and the fact that $F(0,\lam)=0$, one has
\begin{equation}
\label{eqn:Festima1}
  \left\Vert F(\epsilon\Tilde{\xi},\lambda)-F(0,\lambda)-\epsilon\left(-\Tilde{\xi}\partial_x^2 p_0(0)-\partial_x \hat{p}_1(0,y;\epsilon\Tilde{\xi})\right)\right\Vert_{C^{2+\alpha}(\mathbb{R})}\leq C\epsilon^2.  
\end{equation}
Moreover, one can easily verify that
\begin{align}
\label{eqn:Festima2}
    \sup_{\|\Tilde{\xi}\|_{C^{3+\alpha}(\mathbb{R})}\leq 1}\left\Vert\Tilde{\xi}\partial_x^2 p_0(0)+\partial_x \hat{p}_1(0,y;\epsilon\Tilde{\xi})\right\Vert_{C^{2+\alpha}(\mathbb{R})}\leq C.
\end{align}
In fact, since $p_0(x)$ has an explicit formula, the boundedness of the first term follows directly. While, the boundedness of the second follows a similar proof of Lemma \ref{lem:justification_II} (apply classical elliptic estimates to equation \eqref{eqn: invivo_general_linearpart_p}). Then, estimates \eqref{eqn:Festima1} and \eqref{eqn:Festima2} together yield the map defined in \eqref{eqn:Fderv0} is a bounded linear operator and therefore it is indeed a Fr\'{e}chet derivative.

For the second part, when $\xi\in X_1^{3+\alpha}$ and takes the form $\xi(y)=\sum_{l}a_l\cos{ly}$. By using the orthogonality of the Fourier basis, one can solve each Fourier mode in the first-order term separately in the same manner as in Section \ref{sec:The linearized perturbed solutions}. Then, \eqref{eqn:Fderv0} further reduce to 
\begin{align}
\label{eqn:decompostion}
    \left[F_{\xi}(0,\lam)\right]\xi
    &=\sum_l a_l\left(-\partial_x^2 p_0(0)-\partial_x \tilde{p}_{1,l}(0)\right)\cos{ly},\\\nonumber
    &=\sum_l a_l E(\lam,l)\cos{ly}.
\end{align}
By now, we have completed the proof.
\end{proof}
In the following subsection, we show that based on the eigenvalue problem \eqref{eqn: eigenprob} and the properties of $E(\lam,l)$ established in Proposition \ref{rem:F1}, we can determine bifurcation points and further conclude the existence of desired bifurcation branches in equation \eqref{eqn: F=0}.
\subsection{Existence of nontrivial bifurcation branches} 
\label{sec:Existence of nontrivial bifurcation branches}
In \cite{borisovich2005symmetry}, Borisovich and Friedman applied the Crandall-Rabinowitz theorem to establish the existence of nonradially symmetric solutions to a tumor growth model derived from \cite{greenspan1976growth}. As discussed in the Introduction, the model described by \eqref{eqn:general_all} is derived from the incompressible limit of the PME, which makes it fundamentally different from the models developed from \cite{greenspan1976growth}. In the latter models, non-radial symmetric solutions are typically sought. In contrast, in the PME-derived models, the focus shall shift to finding nonsymmetric traveling wave solutions. Fortunately, we are able to show that the bifurcation analysis framework introduced by Friedman, using the Crandall-Rabinowitz theorem, is still applicable. We carry out the proof in this subsection. For the reader’s convenience, we first recall the Crandall-Rabinowitz theorem below.

\begin{thm}
\label{thm:Crandall}
    Let $W,Z$ be real Banach spaces and $\calF(w,\mu)$ be a $C^{p}$ map, $p\geq 3$, which maps a neighborhood of $(0,\mu_0)$ in $W\times\mathbb{R}$ into $Z$. For any $\nu\in\mathbb{R}$, the Fr\'{e}chet derivative $\calF_w(0,\nu):=\frac{d}{dw}F(0,\nu)$ maps $W$ to $Z$. Suppose
    \begin{enumerate}
        \item $\calF(0,\mu)=0$ for all $\mu$ in a neighborhood of $\mu_0$,
        \item The kernel space of $\calF_w(0,\nu)$ is of one dimensional spanned by $w_0\in W$, i.e.,  $\text{Ker}\{\calF_w(0,\mu_0)\}=\text{span}\{w_0\}$.
        \item The range of $\calF_w(0,\mu_0)$ has codimension $1$, that is, $\text{dim}\{Z/Z_1\}=1$ with $Z_1=\text{Img}\{\calF_w(0,\mu_0)\}\subseteq Z$.
        \item The mix derivative $\calF_{w\mu}(w,r)$, $w\in W, r\in\mathbb{R}$, satisfies $\left[\calF_{w\mu}(0,\mu_0)\right]w_0\notin Z_1$.
    \end{enumerate}
    Then, $(0,\mu_0)$ is a bifurcation point of equation $\calF(w,\mu)=0$ in the following sense: In a neighborhood of $(0,\mu_0)$, the set of solutions of $\calF(w,\mu)=0$ consists of two $C^{p-2}$ smooth curves $\mathcal{C}_1$ and $\mathcal{C}_2$, which intersect only at the point $(0,\mu_0)$. Moreover, $\mathcal{C}_1$ is the curve $(0,\mu)$ and $\mathcal{C}_2$ can be parameterized by a small parameter $\epsilon$ as follows:
    \begin{equation*}
        \mathcal{C}_2: (w(\epsilon),\mu(\epsilon)),\, \epsilon \text{ small},\, (w(0),\mu(0))=(0,\mu_0),\, w'(0)=w_0.
    \end{equation*}
\end{thm}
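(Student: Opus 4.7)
The statement is the classical Crandall--Rabinowitz bifurcation theorem, whose standard proof proceeds via a Lyapunov--Schmidt reduction. I would first use the Fredholm structure implicit in assumptions (2) and (3) to split the spaces: write $W = \mathrm{span}\{w_0\}\oplus W_2$ for some closed complement $W_2$, and $Z = Z_1 \oplus \mathrm{span}\{z_0\}$ for some $z_0 \notin Z_1$. Let $P\colon Z \to Z_1$ be the continuous projection along $z_0$. The operator $\calF_w(0,\mu_0)\vert_{W_2}\colon W_2 \to Z_1$ is then a bijection, so by the implicit function theorem applied to $P\calF$, the equation $P\calF(sw_0 + \phi,\mu)=0$ has a unique $C^{p}$ solution $\phi = \phi(s,\mu)\in W_2$ in a neighborhood of $(0,\mu_0)$, with $\phi(0,\mu)=0$ and $\partial_s\phi(0,\mu_0)=0$ (the latter from differentiating and using $\calF_w(0,\mu_0)w_0 = 0$).

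The next step would be to reduce the original equation $\calF(w,\mu)=0$ to the scalar equation $g(s,\mu) := (I-P)\calF(sw_0 + \phi(s,\mu),\mu) = 0$, where $g$ takes values in the one-dimensional space $\mathrm{span}\{z_0\}$ and is identified with a real-valued $C^p$ function. Assumption (1) gives $g(0,\mu)\equiv 0$, so I would factor $g(s,\mu) = s\, h(s,\mu)$ with $h$ of class $C^{p-1}$ (the standard trick of writing $g(s,\mu) = s\int_0^1 \partial_s g(ts,\mu)\,dt$). The nontrivial branch $\mathcal{C}_2$ will be parameterized by the zero set of $h$, which at $(0,\mu_0)$ satisfies $h(0,\mu_0) = \partial_s g(0,\mu_0) = 0$ because $\calF_w(0,\mu_0)w_0 = 0$.

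The crucial computation is then $\partial_\mu h(0,\mu_0)$. Differentiating twice at $(0,\mu_0)$ and using $\partial_s \phi(0,\mu_0)=0$ together with $P\calF_{w\mu}(0,\mu_0)w_0 \in Z_1$, one obtains $\partial_\mu h(0,\mu_0) = (I-P)\calF_{w\mu}(0,\mu_0)w_0$, which is nonzero precisely by transversality assumption (4). The implicit function theorem applied to $h(s,\mu)=0$ then yields a $C^{p-2}$ curve $\mu = \mu(s)$ with $\mu(0)=\mu_0$, and tracing back through the reduction gives $w(s) = sw_0 + \phi(s,\mu(s))$ with $w'(0)=w_0$, producing $\mathcal{C}_2$. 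The trivial curve $\mathcal{C}_1 = \{(0,\mu)\}$ is immediate from (1), and uniqueness of the Lyapunov--Schmidt reduction shows these are the only solutions in a neighborhood.

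The principal technical obstacle in this approach is checking regularity along the reduction carefully---each differentiation loses one order of smoothness, so one must confirm that the $C^{p-2}$ regularity claimed for $\mathcal{C}_2$ survives the successive applications of the implicit function theorem and the factorization $g(s,\mu) = s\,h(s,\mu)$. Everything else---the existence of the projections, the local solvability, and the transversality calculation---is routine Banach-space calculus once the Fredholm decomposition is in place. Since this is a classical result, in the context of the paper I would not reprove it but rather apply it directly, with the work concentrated in verifying hypotheses (1)--(4) for the map $F(\xi,\lambda)$ at the candidate bifurcation points $(0,\lambda_0^l)$ supplied by Proposition \ref{rem:F1} and the eigenvalue characterization \eqref{eqn: eigenprob}.
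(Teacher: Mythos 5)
The paper does not prove Theorem~\ref{thm:Crandall} at all: it is stated as the classical Crandall--Rabinowitz bifurcation theorem and used as a black box, with the paper's effort concentrated on verifying its hypotheses for the map $F(\xi,\lambda)$ at the candidate points $(0,\lambda_0^l)$. Your proposal correctly recognizes this (your final paragraph says exactly so) and nonetheless supplies a correct sketch of the standard Lyapunov--Schmidt argument: the splittings $W=\mathrm{span}\{w_0\}\oplus W_2$ and $Z=Z_1\oplus\mathrm{span}\{z_0\}$, the implicit-function-theorem solve of $P\calF=0$ for $\phi(s,\mu)\in W_2$, the reduction to the scalar bifurcation function $g(s,\mu)=(I-P)\calF(sw_0+\phi,\mu)$, the factorization $g=s\,h$ using $g(0,\mu)\equiv 0$, and the transversality computation $\partial_\mu h(0,\mu_0)=(I-P)\calF_{w\mu}(0,\mu_0)w_0\neq 0$.

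One small point worth tightening in the transversality step: when differentiating $\partial_s g$ in $\mu$ at $(0,\mu_0)$, the cross term $(I-P)\calF_{ww}(0,\mu_0)(\partial_\mu\phi(0,\mu_0),w_0)$ must also be discarded, and this follows because $\phi(0,\mu)\equiv 0$ forces $\partial_\mu\phi(0,\mu_0)=0$, not merely because $\partial_s\phi(0,\mu_0)=0$; similarly the term $\calF_w(0,\mu_0)\partial_{s\mu}\phi(0,\mu_0)$ is annihilated by $(I-P)$ since it lies in $Z_1$. With those two observations made explicit, your sketch is the standard proof and is consistent with the statement. Since the paper cites rather than reproves the theorem, there is no paper argument to compare against; your instinct to apply it directly is exactly what the paper does.
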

To apply Theorem \ref{thm:Crandall} to the nonlinear map \eqref{eqn: bifurcation_fcnal}, the key step is to identify a bifurcation point \((0, \lambda_0)\) such that the Fréchet derivative at this point, \(F_{\xi}(0, \lambda_0)\), satisfies the conditions outlined in Theorem \ref{thm:Crandall}. We demonstrate that this can be achieved using the properties of $E(\lam,l)$ established in Proposition \ref{rem:F1}. Consequently, \eqref{eqn:general_all} admits nonsymmetric traveling wave solutions. This main result is summarized in the following main theorem.

\begin{thm}
\label{thm: mainthm}
Consider the nonlinear map \eqref{eqn: bifurcation_fcnal}, which maps $ X_1^{3+\alpha}\times\mathbb{R}$ to $X_1^{2+\alpha}$. Assume $0<\epsilon\ll 1$. Then for each positive integer $l$, there exists a $\lam_0^l>l^2$ such that $(0,\lam_0^l)$ is a bifurcation point to $F(\xi,\lam)=0$ in the following sense: In a neighborhood of $(0,\lam_0^l)$, the set of solutions of $F(\xi,\lam)=0$ consists of two smooth curves $\mathcal{C}_1$ and $\mathcal{C}_2$ that intersect only at the point $(0,\lam_0^l)$. Moreover, $\mathcal{C}_1$ is the curve $(0,\lam)$ and $\mathcal{C}_2$ can be parameterized as follows:
    \begin{equation*}
        \mathcal{C}_2: (\xi_\epsilon,\lam_\epsilon):=(\xi(\epsilon),\lam(\epsilon)),\, \text{with $\epsilon$ small},\,(\xi(0),\lam(0))=(0,\lam_0^l),\, \xi'(0)=\cos{ly}.
    \end{equation*}
\end{thm}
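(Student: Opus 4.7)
The plan is to apply the Crandall--Rabinowitz theorem (Theorem \ref{thm:Crandall}) to the nonlinear map $F\colon X_1^{3+\alpha}\times\mathbb{R}\to X_1^{3+\alpha}$ at the candidate bifurcation point $(0,\lambda_0^l)$, with $\lambda_0^l$ supplied by Proposition \ref{rem:F1}(1), after quotienting out the translation mode $\cos(0\cdot y)\equiv 1$ in $y$ which, by the $x$--translation invariance of the tube $\Omega$, lies trivially in $\ker F_\xi(0,\lambda)$ for every $\lambda$. The required $C^p$ regularity with $p\geq 3$ of $F$ will follow from the expansion estimates in Lemmas \ref{lem:justification_I} and \ref{lem:justification_II} together with the explicit smooth dependence of $p_0$, $c_0$ and $\sigma_0$ on $\lambda$.

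Condition (i), $F(0,\lambda)\equiv 0$ near $\lambda_0^l$, is immediate: when $\xi\equiv 0$ the perturbed domain $\Omega_\epsilon$ coincides with $\Omega_0$, so $p_\epsilon$ reduces to the symmetric pressure $p_0$ and $-\partial_x p_0^{\mathrm{(i)}}(0)=\sigma_0(\lambda)$ by \eqref{eqn: invivo_v0}. For conditions (ii) and (iii), I would expand $\xi(y)=\sum_{j\geq 0}a_j\cos(jy)$ and combine the linearity of the boundary value problems \eqref{eqn: invivo_general_linearpart_n}--\eqref{eqn: invivo_general_linearpart_p}, the variable--separable structure \eqref{eqn:separation_linear_invivo}, and the eigenvalue identity \eqref{eqn: eigenprob} to obtain
\begin{equation*}
[F_\xi(0,\lambda)]\xi=\sum_{j\geq 0} a_j\,E(\lambda,j)\,\cos(jy).
\end{equation*}
By Proposition \ref{rem:F1}(2) applied to $\lambda_0^l>l^2>1$, the unique positive real $j$ with $E(\lambda_0^l,j)=0$ is $j=l$; restricting to the mean--zero subspace to kill the $j=0$ translation mode therefore gives $\ker F_\xi(0,\lambda_0^l)=\mathrm{span}\{\cos(ly)\}$, one--dimensional. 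The range consists of series missing the $\cos(ly)$ component, and it is closed of codimension one thanks to the asymptotic growth $|E(\lambda_0^l,j)|\sim C j$ as $j\to\infty$, extractable directly from \eqref{eqn:E}, which keeps the remaining eigenvalues bounded away from zero.

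For the transversality condition (iv), I compute $F_{\xi\lambda}(0,\lambda_0^l)\cos(ly)=\partial_\lambda E(\lambda_0^l,l)\cos(ly)$, so it suffices to check that $\partial_\lambda E(\lambda_0^l,l)\neq 0$. This follows from the monotonicity argument accompanying Proposition \ref{rem:F1}(1): since $E(\lambda,l)$ crosses zero at $\lambda_0^l$ transitioning from $-\infty$ (as $\lambda\to 0^+$) to $0^+$ (as $\lambda\to\infty$), and this zero is unique on $(l^2,\infty)$, the derivative at $\lambda_0^l$ must be strictly positive; a direct differentiation of the closed form \eqref{eqn:E} provides an alternative verification.

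With (i)--(iv) in hand, Crandall--Rabinowitz produces the asserted curve $\mathcal{C}_2:(\xi(\epsilon),\lambda(\epsilon))$ at $(0,\lambda_0^l)$ with $\xi'(0)=\cos(ly)$, which in turn yields a genuinely non--symmetric traveling wave solution of \eqref{eqn: invivo_general_pert_sol}. The main technical obstacles I anticipate are, first, the careful bookkeeping of the Hanzawa--type transformation behind Lemmas \ref{lem:justification_I}--\ref{lem:justification_II} so as to transfer the perturbed elliptic problems on $\Omega_\epsilon$ back to the fixed reference domain and thereby justify both the Fr\'echet differentiability of $F$ and the mode--by--mode diagonalization of $F_\xi(0,\lambda)$, and second, the clean handling of the $j=0$ translation kernel so that the one--dimensional kernel hypothesis of the Crandall--Rabinowitz theorem is rigorously met. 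Once these two points are settled, the remaining verifications are routine consequences of the explicit formula \eqref{eqn:E} and Proposition \ref{rem:F1}.
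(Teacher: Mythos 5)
Your proposal follows essentially the same route as the paper: pick $\lam_0^l$ from Proposition~\ref{rem:F1}(1), diagonalize $F_\xi(0,\lam)$ on the cosine basis via \eqref{eqn: eigenprob}, identify the one-dimensional kernel and codimension-one range, and check transversality from $\partial_\lam E(\lam_0^l,l)>0$. The one point where you genuinely depart, and in fact improve on the paper, is the explicit handling of the $j=0$ mode. A direct substitution in \eqref{eqn:E} gives $E(\lam,0)\equiv 0$ for all $\lam$ (this is the $x$-translation invariance of the tube, and Proposition~\ref{rem:F1}(2) itself excludes $l=0$ from the set where $E\neq 0$). Consequently, on the full space $X_1^{3+\alpha}$ the operator $F_\xi(0,\lam_0^l)$ has a \emph{two}-dimensional kernel spanned by $1$ and $\cos(ly)$, so condition \eqref{eqn: degenerateofE} as written in the paper fails at $j=0$, and the Crandall--Rabinowitz hypotheses (ii)--(iii) are not met verbatim. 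Your remedy---quotienting out the constant mode, equivalently restricting $F$ to the mean-zero subspace of $X_1^{3+\alpha}$---is the standard fix and is needed for the argument to be airtight; the paper passes over this silently. Two smaller remarks: your heuristic that a unique sign change from $-\infty$ to $0^+$ forces $\partial_\lam E(\lam_0^l,l)>0$ is not by itself conclusive (a simple zero is not guaranteed a priori), so the direct differentiation you also cite, which is what the paper carries out, should be regarded as the actual verification; and the linear growth $|E(\lam_0^l,j)|\sim Cj$ is consistent with boundedness of $F_\xi$ only because of the one-derivative drop from $X_1^{3+\alpha}$ to the target $X_1^{2+\alpha}$ used in applying Theorem~\ref{thm:Crandall}, which is worth stating explicitly. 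Otherwise the arguments agree.
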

\begin{rem}
As we interpreted before, the nontrivial branch ($ \mathcal{C}_2$) corresponds to nonsymmetric traveling wave solutions to \eqref{eqn:general_all} with the steady boundary profile $\xi_\epsilon(y)$ and consumption rate $\lam_\epsilon$. Moreover, the traveling speed is given by $v_0(\lam_\epsilon)$ (recall \eqref{eqn: invivo_v0}).
\end{rem}
\begin{rem}
Note that by applying Crandrall-Rabinowitz theorem, we do not obtain any information of $\lam'(0)$. In fact, it can be computed using Corollary 2.3 in \cite{liu2007imperfect}, this has been done in a recent paper \cite{zhao2025determination} for a different Hele-Shaw type tumor growth model. However, to calculate $\lam'(0)$, one needs to expand the perturbed solution to the order of $O(\epsilon^2)$, which brings much more computation. We do not provide such a calculation in this paper, but we point out that similar to the calculation in \cite{zhao2025determination}, as a two-dimensional model, by using the symmetric of the basis function, one can verify similarly that $\lam'(0)=0$.
\end{rem}

Finally, we provide the proof of Theorem \ref{thm: mainthm}.
\begin{proof}
According to Proposition \ref{rem:F1}, given any positive integer $l$, we can find a unique $\lam_0^l>l^2$ such that $E(\lam_0^l,l)=0$. Then we show that $(0,\lam_0^l)$ is indeed a bifurcation point to \eqref{eqn: F=0} by verifying that the map $F(\xi,\lam)$ indeed satisfies the conditions to apply Theorem \ref{thm:Crandall} with the setting $W=X_1^{3+\alpha}$, $Z=X_1^{2+\alpha}$, $w=\xi$, $w_0=\cos{ly}$, $\mu=\lambda$, and $\mu_0=\lambda_0^l$.

For the differentiability of $F$, it is equivalent to establishing the regularity of the corresponding PDEs. Firstly, note that the structure of the PDEs guarantees that $F$ maps $X_1^{3+\alpha}\times\mathbb{R}$ to $X_1^{2+\alpha}$. Secondly, using classical elliptic estimates and the Sobolev imbedding theory, one can justify that $F(\xi,\mu)$ is differentiable to any order by repeating the process in the same way as Lemma \ref{lem:justification_I} and Lemma \ref{lem:justification_II}. Therefore, $F(\xi,\lam)$ is $C^p$ with $p\geq 3$.

Next, we verify that assumptions (1) to (4) in Theorem \ref{thm:Crandall} hold for $F_\xi(0,\lam)$. Firstly, (1) obviously holds since these trivial solutions correspond to the symmetry solutions solved in Section \ref{sec:Symmetric solutions}. Regarding assumptions (2) and (3), recall that $X_1^{3+\alpha}$ and $X_1^{2+\alpha}$ share the same set of basis functions, and $F_{\xi} (0,\lam)$ as a bounded linear operator is characterized by the eigenvalue problem \eqref{eqn: eigenprob} with real, distinct, and nondegenerated eigenvalues. Thus, to verify (2) and (3), it is sufficient for us to check that our choice of $\lam_0^l$ ensures: 
\begin{equation}
\label{eqn: degenerateofE}
    E(\lam_0^l,j)\neq 0\quad\text{for any }\, j\neq l;\quad\text{and}\quad E(\lam_0^l,l)=0.
\end{equation}
Based on Proposition \ref{rem:F1} and the way chosen $\lam_0^l$ (recall \eqref{eqn:bifurcation point}), condition \eqref{eqn: degenerateofE} is indeed valid. Finally, for assumption (4), it suffices for us to show $\partial_{\lam}E(\lam_0^l,l)\neq 0$. By using condition \eqref{eqn:bifurcation point}, expression \eqref{eqn:partiallamE} yields,
\begin{align*}
\partial_{\lam}E(\lam_0^l,l)
&=\frac{G_0 c_B}{2\sqrt{\lam_0^l}}\left(\frac{l+1}{\sqrt{\lam_0^l}(\sqrt{\lam_0^l}+1)^2}-\frac{l+\sqrt{1+l^2}}{\sqrt{\lam_0^l+l^2}(\sqrt{\lam_0^l+l^2}+\sqrt{1+l^2})^2}\right).
\end{align*}
Note that the above value represents the slope at the intersection point of the curve with the horizontal axis in the left graph of Figure 1, thus it is always positive, and we conclude that
 \begin{equation}
     \left[F_{\xi \lam}(0,\lambda_0^l)\right]\cos{ly}=\left[\partial_{\lam}E(\lam_0^l,l)\right]\cos{ly}\notin \text{Img}\left[F_{\xi}(0,\lambda_0^l)\right].
 \end{equation}
By now, we have finished verifying all the assumptions in the Crandall-Rabinowitz theorem. Therefore, $(0,\lam_0^l)$ is a bifurcation point to \eqref{eqn: F=0} and generates a nontrivial solution branch. 
\end{proof}

\section*{Acknowledgments}
The work of Y.F. is supported by the National Key R\&D Program of China, Project Number 2021YFA1001200. J.-G.L is supported by NSF under award DMS-2106988. The work of Z.Z. is supported by the National Key R\&D Program of China, Project Number 2021YFA1001200, and NSFC grant number 12031013, 12171013. The authors thank Jiajun Tong (BICMR) and Xinyue Zhao (UTK) for helpful discussions and the anonymous referee for many helpful suggestions for improving the quality of this paper.

\appendix
\section{Incompressible limit}\label{ic}
To verify the validity of the Hele-Shaw type free boundary problem \eqref{eqn: mainmodel},  we provide the rigorous derivation of its weak from via the incompressible limit from the PME-type tumor growth model.  Recall that the PME type density equation of tumor growth writes,
\begin{equation}\label{tgn}
\begin{cases}
\partial_t\rho_m=\Delta\rho_m^m+G_0\rho_m c_m,&\\
\tau \partial_tc_m=\Delta c_m-\Psi(\rho_m,c_m),\quad &\\
c_m\to c_B,\quad \text{as  } |\textbf{x}|\to\infty,&\\
\end{cases}
\end{equation}
where $\Psi(\rho_m,c_m)$ is given by \eqref{eqn:PME_overall} and  we set $\chi:=\chi_{S}$ and $\Tilde{\chi}:=\chi_{S^c}$ for convenience. The pressure 
$P_m:=\frac{m}{m-1}\rho_m^{m-1}$ satisfies 
\begin{equation}\label{tgnp}
\partial_t P_m=(m-1)P_m(\Delta P_m+G_0c_m)+|\nabla P_m|^2.
\end{equation}
The initial data is given by 
\[(\rho_m(0),P_m(0)):=(\rho_{m,0},P_{m,0}),\quad \textbf{x}\in\mathbb{R}^n.\]
To align with the Hele-Shaw problem \eqref{eqn: mainmodel}, we consider $\tau=0$ for Eq.~\eqref{tgn}.

\vspace{2mm}
In the following, we first give some uniform in $m$ a priori estimates of the solution $(\rho_m,c_m,P_m)$. Then, we verify the incompressible limit through the weak solution and establish the complementarity relation via the viewpoint of obstacle problem inspired by~\cite{KimMellet2021}.
\begin{lem}[A priori estimates]\label{be}
Assume that $m>2$ and the initial data $(\rho_{m,0},P_{m,0})$ satisfy
\begin{equation*}
\begin{aligned}
&\|\rho_{m,0}\|_{L^1(\mathbb{R}^n)}\leq C,&&\|\Delta\rho_{m,0}^m+G_0\rho_{m,0} c_{m,0}\|_{L^1(\mathbb{R}^n)}\leq C,&&&\|\nabla \rho_{m,0}\|_{L^1(\mathbb{R}^n)}\leq C,\\
& P_{m,0}(\textbf{x})\leq C_0R_0^2h(\frac{\textbf{x}}{R_0}), &&\|\nabla P_{m,0}\|_{L^2(\mathbb{R}^n)}\leq C\\
\end{aligned}
\end{equation*}for some $R_0,C_0>0$ and $h(\textbf{x}):=\frac{1}{2n}(1-|\textbf{x}|^2)_+$. Then, the solution $(\rho_m,P_m,c_m)$ of  \eqref{tgn} satisfies the following uniform in $m$ a priori estimates as 
\begin{equation}\label{e1}0\leq c_{m}\leq c_B,\ (x,t)\in Q_T,\quad supp(P_{m}(t))\subset B_{R_T}(0),\ 0\leq t\leq T,\end{equation}
\begin{equation}\label{e2}\|P_{m}\|_{L^\infty(Q_T)}+\|\rho_{m}\|_{L^\infty(Q_T)}\leq C(T),\end{equation}
\begin{equation}\label{e3}\sup\limits_{0\leq t\leq T}[\|c_{m}-c_B\|_{L^2(\mathbb{R}^n)}+\|\nabla c_{m}\|_{L^2(\mathbb{R}^n)}+\|\partial_t c_m\|_{L^1(\mathbb{R}^n)}]\leq C(T),\end{equation}
\begin{equation}\label{e4}\sup\limits_{0\leq t\leq T}[\|\partial_t\rho_{m}\|_{L^1(\mathbb{R}^n)}+\|\nabla \rho_{m}\|_{L^1(\mathbb{R}^n)}]\leq C(T),\end{equation}
\begin{equation}\label{e5}\|\partial_t P_m\|_{L^1(Q_T)}+\|\nabla P_m\|_{L^2(Q_T)}\leq C(T),\end{equation}
where $R_T=R_0e^{\frac{C_0T}{n}}$.
\end{lem}
\begin{proof}
\emph{}
By the comparison principle,  we directly obtain 
\begin{equation}\label{est1}
0\leq c_m\leq c_B,\quad (\textbf{x},t)\in Q_T.
 \end{equation}
 Inspired by \cite{CKY2018}, let $C_0\geq G_0c_B$ and
\begin{equation*}
h(x):=\frac{1}{2n}(1-|\textbf{x}|^2)_+.
\end{equation*}
Then, for given $\phi:=C_0R^2(t)h(\frac{\textbf{x}}{R(t)})$, $R(t):=R_0e^{\frac{C_0t}{n}}$, it holds on the support of $\phi$ that 
\begin{equation*}
\begin{aligned}
\partial_t\phi-(m-1)\phi\underbrace{(\Delta \phi+G_0c_B)}_{\leq 0}-|\nabla\phi|^2\geq\frac{C_0R(t)}{n}(R'(t)-\frac{C_0R(t)}{n})=0.
\end{aligned}
\end{equation*}
Suppose that $P_{m,0}(x)\leq C_0R_0^2\phi(\frac{\textbf{x}}{R_0})$ with some $R_0>0$, it follows from the comparison principle that
\begin{equation*}
P_m(\textbf{x},t)\leq \phi(\textbf{x},t),\quad(\textbf{x},t)\in Q_T.
\end{equation*}
The above conclusion means
\begin{equation}\label{est2}
supp(P_{m})\subset B_{R_T}(0), \quad \|P_{m}\|_{L^\infty(Q_T)}\leq C(T),
\end{equation}
and further concludes for $m>2$ that 
\begin{equation}\label{rhoe}
supp(\rho_{m})\subset B_{R_T}(0), \quad \|\rho_{m}\|_{L^\infty(Q_T)}\leq C(T)
\end{equation}with $R_T=R_0e^{\frac{C_0T}{n}}$.
 We multiply~$\eqref{tgn}_2$ by $c_m-c_B$ and obtain
 \begin{equation*}
 \begin{aligned}
 \int_{\mathbb{R}^n}|\nabla c_m|^2dx+\int_{\mathbb{R}^n}[\lambda\chi \rho_m+\tilde{\chi}(1-\rho_m)_+]|c_m-c_B|^2dx\leq \int_{\mathbb{R}^n}\chi \rho_m c_B|c-c_B|dx.
 \end{aligned}
 \end{equation*}
Using the fact $\lambda\chi\rho_m+\tilde{\chi}(1-\rho_m)_+\geq \min\{\lambda\chi,\tilde{\chi}\}>0$, the estimate~\eqref{rhoe}, and H{\"o}lder's inequality yields 
 \begin{equation}\label{est3}
 \sup\limits_{0\leq t\leq T}\|c_m(t)-c_B\|_{L^2(\mathbb{R}^n)}+\sup\limits_{0\leq t\leq T}\|\nabla c_m(t)\|_{L^2(\mathbb{R}^n)}\leq C(T).
\end{equation}
 We use the equation $\eqref{tgn}_2$ and Kato's inequality, it holds  
 \begin{equation}\label{est4}
 0\leq \Delta |\partial_t c_m|-|\partial_t c_m|(\lambda\chi\rho_m+\tilde{\chi}(1-\rho_m)_+)+|\partial_t\rho_m|(G_0\chi c_m+\tilde{\chi}|c_m-c_B|), 
 \end{equation}
 which implies 
 \begin{equation}\label{ctrhot}
 \int_{\mathbb{R}^n}|\partial_t c_m|dx\leq C\int_{\mathbb{R}^n}|\partial_t\rho_m|dx,\ t>0.
 \end{equation}
 By means of Kato's inequality for $\eqref{tgn}_1$, we have 
 \begin{equation}\label{inet}
 \partial_t |\partial_t\rho_m|\leq \Delta |\partial_t\rho_m^m|+G_0c_B|\partial_t\rho_m|+G_0\rho_m|\partial_t c_m|.
 \end{equation}
 Taking \eqref{ctrhot} into consideration,  and integrating the above inequality \eqref{inet} on $[0,t]$ for any $0\leq t\leq T$, we have
 \begin{equation}\label{est5}
 \sup\limits_{0\leq t\leq T}\|\partial_t\rho_m\|_{L^1(\mathbb{R}^n)}+\sup\limits_{0\leq t\leq T}\|\partial_t c_m\|_{L^1(\mathbb{R}^n)}\leq C(T).
 \end{equation}
 Parallel to the proof of the above estimate, it holds 
 \begin{equation}\label{est6}
 \sup\limits_{0\leq t\leq T}\|\nabla \rho_m\|_{L^1(\mathbb{R}^n)}\leq C(T).
 \end{equation}
 We multiply the inequality equation~$\eqref{inet}_1$ by $\theta(x)\geq 0$ with $-\Delta \theta =\chi_{B_{R_T+1}}$ and integrate on $Q_T$, then it yields 
 \begin{equation*}
 \begin{aligned}
 \|\partial_t \rho_m^m\|_{L^1(Q_T)}\leq &\|\theta\|_{L^\infty(\mathbb{R}^n)}[(\|\partial_t\rho_m(0) \|_{L^1(\mathbb{R}^n)}+\|\partial_t\rho_m(T) \|_{L^1(\mathbb{R}^n)})\\
 &+\|G_0c_B|\partial_t\rho_m|+G_0\rho_m|\partial_t c_m|\|_{L^1(Q_T)}]\leq C(T).
 \end{aligned}
 \end{equation*}
 Hence, we obtain
 \begin{equation}\label{est7}
 \|\partial_t P_m\|_{L^1(Q_T)}\leq \frac{m}{2^{m-2}}\|\partial_t\rho_m\|_{L^1(Q_T)}+2\|\partial_t\rho_m^m\|_{L^1(Q_T)}\leq C(T).
 \end{equation}
 In addition, it holds by integrating~\eqref{tgnp} on $Q_T$ that 
\begin{equation}\label{est8}
\begin{aligned}
\|\nabla P_m\|_{L^2(Q_T)}&\leq \frac{m-1}{m-2}G_0c_B\|P_m\|_{L^1(Q_T)}+\frac{1}{m-1}[\|P_{m,0}\|_{L^1(\mathbb{R}^n)}+\|P_m(T)\|_{L^1(\mathbb{R}^n)}]\\
&\leq C(T).
\end{aligned}
\end{equation}
Combining \eqref{est1},\eqref{est2},\eqref{est3},\eqref{est4},\eqref{est5},\eqref{est6},\eqref{est7}, and \eqref{est8}, we complete the proof.
\end{proof}

Based on the basic  a priori estimates in Lemma~\ref{be}, we derive some convergence results for the solution $\rho_m, P_m, c_m$ in $m$, and then further prove both the incompressible limit and the complementarity relation.
\begin{thm}
Under the same initial assumptions of Lemma~\ref{be}, there exist a pair of functions  $(\rho_\infty, P_\infty, c_\infty)$, satisfying  $\rho_{\infty}\in L^1(Q_T)\cap L^\infty(Q_T)\cap BV(Q_T)$, $ P_\infty\in L^1(Q_T)\cap L^\infty(Q_T)\cap BV(Q_T)\cap L^2(0,T;H^1(\mathbb{R}^n))$, $c_\infty-c_B\in L^2(0,T;H^1(\mathbb{R}^n))\cap BV(Q_T)$ with $0\leq c_\infty\leq c_B$, such that, after extracting subsequences, as $m\to \infty$, it holds
\begin{align}
&\rho_m\to \rho_\infty,&&\text{in }L^p(Q_T)\text{ with }\ 1\leq p<\infty,\label{c1}\\
&P_m\to P_\infty,&&\text{in }L^p(Q_T)\text{ with } 1\leq p<\infty,\label{c2}\\
&c_m\to c_\infty,&& \text{in }L^p_{loc}(Q_T)\text{ with } 1\leq p<\infty,\label{c3}\\
&(1-\rho_m)_+\to (1-\rho_\infty)_+,&& \text{in }L^p(Q_T)\text{ with } 1\leq p<\infty.\label{c4}
\end{align}
Moreover, the limit $(\rho_\infty,c_\infty,P_\infty)$ satisfies a Hele-Shaw type system as 
\begin{equation}\label{tgnhs}
\begin{cases}
\partial_t\rho_\infty=\Delta P_\infty+G_0\rho_\infty c_\infty,&\text{in }\mathcal{D}'(Q_T),\\
\Delta c_\infty=\Psi(\rho_\infty,c_\infty),\quad &\text{in }\mathcal{D}'(Q_T),\\
c_\infty\to c_B,\quad \text{as  } |\textbf{x}|\to\infty.&\\
0\leq \rho_\infty\leq 1,\quad P_\infty(1-\rho_\infty)=0,& \text{a.e. in }Q_T,
\end{cases}
\end{equation}
with the initial data $\rho_{\infty,0}\in L^1(\mathbb{R}^n)\cap L^\infty(\mathbb{R}^n)\cap BV(\mathbb{R}^n)$. Moreover, the complementarity relation holds as 
\begin{equation}\label{complementarity}
P_{\infty}(\Delta P_{\infty}+G_0c_\infty)=0\quad \text{in }\mathcal{D}'(\mathbb{R}^n\times (0,\infty)).
\end{equation}
\end{thm}
\begin{proof}\emph{Step 1, proof of \eqref{tgnhs}.}
Based on verified estimates~\eqref{e1}-\eqref{e5}, then \eqref{c1}-\eqref{c3} hold by the compactness embedding. Furthermore, \eqref{c1} naturally proves~\eqref{c4}.   Taking \eqref{c1}-\eqref{c4} into account, hence $\eqref{tgnhs}_{1-3}$ holds in the sense of distribution. Since $0\leq P_m\leq C(T)$, it holds by passing to limit that 
 \begin{equation}\label{k1}0\leq \rho_\infty \leq 1,\ \text{a.e. in }Q_T.\end{equation}
In addition, we have $P_m^{\frac{m}{m-1}}=(\frac{m}{m-1})^{\frac{1}{m-1}}\rho_m P_m$, which concludes after taking the limit in $m$ that 
  \begin{equation}\label{k2}P_\infty(1-\rho_\infty)=0,\ \text{a.e. in }Q_T.\end{equation}
Hence, \eqref{k1}-\eqref{k2} prove $\eqref{tgnhs}_{4}$.

\vspace{2mm}

\noindent \emph{Step 2, proof of \eqref{complementarity}.} We set a functional space 
\begin{equation*}
E_t:=\{v\in H^1(\mathbb{R}^n)\cap L^1(\mathbb{R}^n)\ |\ v\geq 0,\langle v,1-\rho_\infty(t)\rangle_{H^1,H^{-1}}=0\}.
\end{equation*}
Let $v$ be a function in $E_{t_0}$ with $t_0>0$, we use the density equation~\eqref{tgn}  and  the  pressure equation \eqref{tgnp} to obtain
\begin{equation}
\begin{aligned}
\int_{\mathbb{R}^n}&\nabla P_{m}\cdot\nabla P_{m}-\rho_{m}\nabla P_{m}\cdot  v+G_0c_m(v-P_{m})dx\\
=&-\frac{1}{m-1}\big[\frac{d}{dt}\int_{\mathbb{R}^n}P_{m}dx-\int_{\mathbb{R}^n}|\nabla P_{m}|^2dx\big]+\frac{d}{dt}\int_{\mathbb{R}^n}v\rho_{m}dx\\
&+\int_{\mathbb{R}^n}G_0c_mv(1-\rho_{m})dx.
\end{aligned}
\end{equation}
Integrating on $(t_0,t_0+\delta)$ for any $\delta>0$, we have
\begin{equation}
\begin{aligned}
\int_{t_0}^{t_0+\delta}&\hskip-4pt\int_{\mathbb{R}^n}\nabla P_{m}\cdot\nabla P_{m}-\rho_{m}\nabla P_{m}\cdot  v+G_0c_m(v-P_{m})dxdt\\
=&-\frac{1}{m-1}\big[\int_{\mathbb{R}^n}(P_{m}(t_0+\delta)-P_{m}(t_0))dx-\int_{t_0}^{t_0+\delta}\hskip-4pt\int_{\mathbb{R}^n}|\nabla P_{m}|^2dxdt\big]\\
&+\int_{\mathbb{R}^n}v(\rho_{m}(t_0+\delta)-\rho_{m}(t_0))dx
+\int_{t_0}^{t_0+\delta}\hskip-4pt\int_{\mathbb{R}^n}G_0c_mv(1-\rho_{m})dxdt.
\end{aligned}
\end{equation}
Thanks to the  lower semi-continuity of $L^2$-norm, it holds 
\begin{equation}\label{liminf}
\begin{aligned}
&\int_{t_0}^{t_0+\delta}\hskip-4pt\int_{\mathbb{R}^n}\nabla P_{\infty}\cdot\nabla P_{\infty}-\nabla P_{\infty}\cdot  v+G_0c_\infty(v-P_{\infty})dxdt\\
&\leq \liminf\limits_{m\to\infty}\int_{t_0}^{t_0+\delta}\hskip-4pt\int_{\mathbb{R}^n}\nabla P_{m}\cdot\nabla P_{m}-\rho_{m}\nabla P_{m}\cdot  v+G_0c_m(v-P_{m})dxdt\\
&=\liminf\limits_{m\to\infty}\int_{\mathbb{R}^n}v(\rho_{m}(t_0+\delta)-\rho_{m}(t_0))dx+\int_{t_0}^{t_0+\delta}\hskip-4pt\int_{\mathbb{R}^n}G_0c_\infty v(1-\rho_\infty)dxdt\\
&\leq \liminf\limits_{m\to\infty}\int_{\mathbb{R}^n}v(\rho_{m}(t_0+\delta)-\rho_{m}(t_0))dx+G_0c_B\int_{t_0}^{t_0+\delta}\hskip-4pt\int_{\mathbb{R}^n}v(1-\rho_\infty)dxdt.
\end{aligned}
\end{equation}
We note that 
\begin{equation*}
\frac{d}{dt}\int_{\mathbb{R}^n}\rho_{m}vdx=-\int_{\mathbb{R}^n}\rho_{m}\nabla P_{m}\cdot \nabla vdx+ \int_{\mathbb{R}^n}G_0c_m\rho_{m}vdx.
\end{equation*}
Then, it follows
\begin{equation*} 
|\frac{d}{dt}\int_{\mathbb{R}^n}\rho_{m}vdx|\leq C\int_{\mathbb{R}^n}|\nabla P_{m}| |\nabla v|dx+G_0c_B \int_{\mathbb{R}^n}\rho_{m}vdx.
\end{equation*}

The first term on the right-hand side is bounded in $L^2(0,T)$, and the second term is also bounded in $L^\infty(0,T)$. We deduce that the function $t\to \int_{\mathbb{R}^n}\rho_{m}vdx$ is bounded in $H^1(0,T)\subset C^{1/2}(0,T)$ and therefore converges uniformly in $[0,T]$. Since $\int_{\mathbb{R}^n}v\rho_{m}dx$ converges to $\int_{\mathbb{R}^n}v\rho_\infty dx$ in $\mathcal{D}'(\mathbb{R}_+)$, we have
\begin{equation*}
\int_{\mathbb{R}^n}v(\cdot)\rho_{m}(\cdot,t)dx\to \int_{\mathbb{R}^n}v(\cdot)\rho_{\infty}(\cdot,t)dx\text{ locally uniformly in }\mathbb{R}_+.
\end{equation*}
Consequently,
\begin{equation}\label{rhomo}
\begin{aligned}
\liminf\limits_{m\to\infty}&\int_{\mathbb{R}^n}v(\cdot)(\rho_{m}(\textbf{x},t_0+\delta)-\rho_{m}(\textbf{x},t_0))dx\\
=&\int_{\mathbb{R}^n}v(\cdot)(\rho_\infty(\textbf{x},t_0+\delta)-\rho_\infty(\textbf{x},t_0))dx\\
=&\int_{\mathbb{R}^n}v(\cdot)(\rho_\infty(\textbf{x},t_0+\delta)-1)dx\leq 0.
\end{aligned}
\end{equation}
We insert~\eqref{rhomo} into~\eqref{liminf}, it holds
\begin{equation*}
\begin{aligned}
\frac{1}{\delta}\int_{t_0}^{t_0+\delta}\hskip-4pt\int_{\mathbb{R}^n}\frac{|\nabla P_{\infty}|^2}{2}-G_0c_\infty P_{\infty} dxdt\leq& \frac{1}{\delta}\int_{t_0}^{t_0+\delta}\hskip-4pt\int_{\mathbb{R}^n}\frac{|\nabla v|^2}{2}-G_0c_\infty v dxdt\\
&+\frac{G_0c_B}{\delta}\int_{t_0}^{t_0+\delta}\langle v,1-\rho_\infty\rangle_{H^1,H^{-1}}dt.
\end{aligned}
\end{equation*}
Due to the fact $P_\infty \in BV(Q_T)$, the trace theorem supports $P_\infty^+=P_\infty$ for the trace $P_\infty^+$ of $P_\infty$. Hence, let $\delta \to 0^+$, we get 
\begin{equation}\label{varine}
\begin{aligned}
&\int_{\mathbb{R}^n}\frac{|\nabla P_{\infty}|^2}{2}-G_0c_\infty P_{\infty} dx \leq \int_{\mathbb{R}^n}\frac{|\nabla v|^2}{2}-G_0c_\infty v dx,
\end{aligned}
\end{equation}
where $\frac{G_0c_B}{\delta}\int_{t_0}^{t_0+\delta}\langle v,1-\rho_\infty\rangle_{H^1,H^{-1}}dt\to \langle v,1-\rho_\infty(t_0)\rangle _{H^1,H^{-1}}=0$ as $t\to 0^+$ is used since $1-\rho_\infty\in C([0,T],H^{-1})$. We can conclude that $P_\infty(t)$ is a global minimizer in $E_t$ a.e. $t>0$.

\vspace{1mm}

Given a test function $\varphi \in C_0^\infty(\mathbb{R}^n\times (0,\infty)) $, we take $v_\epsilon=P_{\infty}+\epsilon P_{\infty} \varphi=P_{\infty}(1+\epsilon\varphi)$ with $|\epsilon|\ll 1$ so that $1+\epsilon\varphi\geq0$.  $P_{\infty}(1+\epsilon\varphi)\in E_t$ holds evidently. Due to~\eqref{varine}, we have
\begin{equation*}
\frac{d}{d\epsilon}\Big\vert_{\epsilon=0}\big[\int_{\mathbb{R}^n}\frac{|\nabla v_\epsilon|}{2}-G_0c_\infty v_\epsilon dx\big]=0,
\end{equation*}
which yields 
\begin{equation*}
\int_{\mathbb{R}^n}\nabla P_{\infty}(t)\cdot\nabla (P_{\infty}(t)\varphi)-G_0c_\infty P_{\infty}(t)\varphi dx=0,\quad\text{a.e. }t\in\mathbb{R}_+.
\end{equation*}
Hence, the complementarity relationship
\eqref{complementarity} holds in the sense of distribution.
\end{proof}

\section{Justification of the expansion}
\label{sec:Justification of the expansion}
\label{Justification of the expansion}
We devote this section to the proof of Lemma \ref{lem:justification_I} and Lemma \ref{lem:justification_II}. 
To begin with, recall that $\Omega$ is the tube-like domain defined in \eqref{eqn:infinity_tube}. $\Omega_0$ and $\Omega_{\xi}$ correspond to the unperturbed and perturbed tumor region, respectively. For concision, we denote the complementary sets as
\begin{equation}
\Omega_0^{\text{c}}=\Omega\setminus\Omega_0,\qquad
\Omega_{\xi}^{\text{c}}=\Omega\setminus\Omega_{\xi}.
\end{equation}
Now, we provide the proof of Lemma \ref{lem:justification_I} as follows. 
\begin{proof}[proof of Lemma \ref{lem:justification_I}]
Note that if we denote $c^{\delta}=c-c_0$, then it satisfies
\begin{subequations}
\begin{alignat}{2}
   -\Delta c^{\delta}+\lambda c^{\delta}&=0,&\qquad\text{in}\quad\Omega_{\xi}\cap\Omega_0:=\Omega_1;\\ 
   -\Delta c^{\delta}+\lambda c^{\delta}&=(1-\lambda)c_0-c_B,&\qquad\text{in}\quad\Omega_{\xi}\cap\Omega_0^{\text{c}}:=\Omega_2;\\ 
    -\Delta c^{\delta}+ c^{\delta}&=(\lambda-1)c_0+c_B,&\qquad\text{in}\quad\Omega_{\xi}^{\text{c}}\cap\Omega_0:=\Omega_3;\\ 
    -\Delta c^{\delta}+ c^{\delta}&=0,&\qquad\text{in}\quad\Omega_{\xi}^{\text{c}}\cap\Omega_0^{\text{c}}:=\Omega_4.
\end{alignat}
\end{subequations}
Write them in a single equation, we get
\begin{equation}
   -\Delta c^{\delta}+\left(\lambda\cdot\chi_{\Omega_{\xi}}+\chi_{\Omega_{\xi}^{\text{c}}}\right)c^{\delta}=\left((1-\lambda)c_0-c_B\right)\cdot\left(\chi_{\Omega_2}-\chi_{\Omega_3}\right),\qquad\text{in}\quad\Omega.
\end{equation}
Observe the fact that $\lambda\chi_{\Omega_{\xi}}+\chi_{\Omega_{\xi}^{\text{c}}}$ can be treated as a function in $L^{\infty}(\Omega)$, $c_0$ has already been solved explicitly in $\Omega$. Furthermore, the areas $\vert\Omega_2\vert$ and $\vert\Omega_3\vert$ are bounded by $\epsilon\Vert\Tilde{\xi}\Vert_{C^{3+\alpha}(\mathbb{R})}$. Then, the classical $W^{2,q}$ estimate of elliptic equations and Sobolev embedding theory together yield the first inequality in Lemma \ref{lem:justification_I}. More precisely, for any $q>2$ and $\alpha=1-2/q$ one has
\begin{equation*}
\Vert c^{\delta}\Vert_{C^{1+\alpha}(\Omega)}
\leq \Vert c^{\delta}\Vert_{W^{2,q}(\Omega)}
\leq \Vert\left((1-\lambda)c_0-c_B\right)\cdot\left(\chi_{\Omega_2}-\chi_{\Omega_3}\right)\Vert_{L^q(\Omega)}
\leq C\vert\epsilon\vert\Vert\Tilde{\xi}\Vert_{C^{3+\alpha}(\mathbb{R})}.
\end{equation*}
Finally, send $q\rightarrow\infty$ to complete the proof.

For the second inequality in Lemma \ref{lem:justification_I}, one can easily write down the equation for $p^{\delta}=p-p_0$, that is
\begin{equation}
-\Delta p^{\delta}=G_0 c^{\delta},\quad\text{in}\quad\Omega,
\end{equation}
 Thus, by using Schauder estimate one has
\begin{align}
\Vert p^{\delta}\Vert_{C^{3+\alpha}(\Omega)}
\leq G_0\Vert c^{\delta}\Vert_{C^{1+\alpha}(\Omega)}
\leq C\vert\epsilon\vert\Vert\Tilde{\xi}\Vert_{C^{3+\alpha}(\mathbb{R})}.
\end{align}
\end{proof}
Next, observe the fact that $(c^{\text{(i)}}, c^{\text{(o)}}, p^{\text{(i)}}, p^{\text{(o)}})$ are defined
in $\Omega_{\xi}$ or $\Omega_{\xi}^c$, respectively. However, the first-order terms $(c_1^{\text{(i)}}, c_1^{\text{(o)}}, p_1^{\text{(i)}}, p_1^{\text{(o)}})$ are only defined in $\Omega_0$ or $\Omega_0^c$. Therefore, we need to transform them to $\Omega_{\xi}$ or $\Omega_{\xi}^c$ by Hanzawa transformation $\calH_{\xi}$, which is defined as follows:
\begin{equation}
\label{eqn:Hanzawa}
    (x,y)=\calH_{\xi}(x',y')=(x'+\calI (x')\epsilon\Tilde{\xi},y'),
\end{equation}
where $\calI\in C^{\infty}$ satisfies
\begin{equation*}
   \calI(\zeta)=\left\{\begin{array}{cc}
         &0,\quad\text{if}\quad\vert\zeta\vert\geq\frac{3}{4}\delta, \\ &1,\quad\text{if}\quad\vert\zeta\vert<\frac{1}{4}\delta,
    \end{array}\right., \quad\text{with}\quad\left\vert\frac{d^k\calI}{d\zeta^k}\right\vert<\frac{C}{\delta^k},
\end{equation*}
where $\delta$ is a small positive scalar. Thus, $\calH_{\xi}$ maps $\Omega_0$ to $\Omega_{\epsilon}$, and maps $\Omega_0^c$ to $\Omega_{\epsilon}^c$. We denote
\begin{subequations}
\label{eqn: HZ transform}
\begin{alignat}{2}
    \hat{c}_1^{\text{(i)}}(x,y;\epsilon\Tilde{\xi})&=c_1^{\text{(i)}}(\calH_{\xi}^{-1}(x,y);\epsilon\Tilde{\xi}),\qquad
    \hat{c}_1^{\text{(o)}}(x,y;\epsilon\Tilde{\xi})&=c_1^{\text{(o)}}(\calH_{\xi}^{-1}(x,y);\epsilon\Tilde{\xi}),\\
    \hat{p}_1^{\text{(i)}}(x,y;\epsilon\Tilde{\xi})&=p_1^{\text{(i)}}(\calH_{\xi}^{-1}(x,y);\epsilon\Tilde{\xi}),\qquad
    \hat{p}_1^{\text{(o)}}(x,y;\epsilon\Tilde{\xi})&=p_1^{\text{(o)}}(\calH_{\xi}^{-1}(x,y);\epsilon\Tilde{\xi}).
\end{alignat}
\end{subequations}
Then, we further define
\begin{align}
\label{eqn:twohat}
\hat{c}_1(x,y;\epsilon\Tilde{\xi})&:=\hat{c}_1^{\text{(i)}}(x,y;\epsilon\Tilde{\xi})+\hat{c}_1^{\text{(o)}}(x,y;\epsilon\Tilde{\xi}),\\
\hat{h}_1(x,y;\epsilon\Tilde{\xi})&:=\hat{h}_1^{\text{(i)}}(x,y;\epsilon\Tilde{\xi})+\hat{h}_1^{\text{(o)}}(x,y;\epsilon\Tilde{\xi}).
\end{align}
Now, we turn to the proof of Lemma \ref{lem:justification_II}. The detail of the proof is cumbersome, but the idea is quite simple and in the same manner as the proof of the Lemma \ref{lem:justification_I}. Therefore, we only provide a sketch of it.
\begin{proof}[proof of Lemma \ref{lem:justification_II}]
The proof is similar to that of Lemma \ref{lem:justification_I}. Denote $c^{\delta}:= c-c_0-\epsilon \hat{c}_1$ and similarly for $p^{\delta}:=p-p_0-\epsilon \hat{p}_1$, where $\hat{c}_1$ and $\hat{p}_1$ are defined in the same manner as \eqref{eqn:inandout}. Then, one can write the equation for $c^{\delta}$ on the whole $\Omega$. Then, employ $W^{2,q}$ estimate of the elliptic equations and embedding theory to obtain the nutrient estimate first, as we did in Lemma \ref{lem:justification_I}. However, to do this, one needs to compute the first and second derivatives of $\hat{c}_1$ with respect to $(x,y)$, which further requires us to consider the change of variables induced by the Hanzawa transformation. This process is cumbersome, but is standard. Therefore, we refer the reader to Theorem 4.5 in \cite{lu2022bifurcation} for a similar proof. Once the nutrient estimate is obtained, the pressure estimate can be obtained using the Schauder estimate in the same way as in the proof of Lemma \ref{lem:justification_I}.
\end{proof}
\bibliography{tumorbibfile}
\bibliographystyle{plain}
\end{document}